\newcommand\test[2]{\mleft(\knds\genfrac..{0pt}{}{#1}{#2}\knds\mright)}
\newcommand{\knds}{\kern-\nulldelimiterspace}
\newtheorem{observation}{Observation}
\newtheorem{case*}{Case}
\newtheorem{case**}{Case}
\newcommand{\floor}[1]{\left\lfloor #1\right \rfloor}
\newcommand{\ceil}[1]{\left\lceil #1 \right\rceil}
\begin{document}
\title{$k$-Sets and Rectilinear Crossings in Complete Uniform Hypergraphs}
%
%
\author{Rahul Gangopadhyay\inst{1} \and
Saswata Shannigrahi\inst{2}}
\authorrunning{R. Gangopadhyay et al.}
%
\institute{ IIIT Delhi, India\\
\email{rahulg@iiitd.ac.in}\\
\and
Saint Petersburg State University, Russia\\
\email{saswata.shannigrahi@gmail.com}}
\maketitle              
\begin{abstract}
 In this paper, we study the $d$-dimensional rectilinear drawings of the complete $d$-uniform hypergraph $K_{2d}^d$. Anshu et al. [Computational Geometry: Theory and Applications, 2017] used Gale transform and Ham-Sandwich theorem to prove that there exist  $\Omega \left(2^d\right)$ crossing pairs of hyperedges in such a drawing of $K_{2d}^d$.  We improve this lower bound by showing that there exist  $\Omega \left(2^d \sqrt{ d}\right)$ crossing pairs of hyperedges in a $d$-dimensional rectilinear drawing of $K_{2d}^d$. We also prove the following results. \\
 
1.  There are $\Omega \left(2^d {d^{3/2}}\right)$ crossing pairs of hyperedges in a $d$-dimensional rectilinear drawing of $K_{2d}^d$ when its $2d$ vertices are either not in convex position in $\mathbb{R}^d$ or form the vertices of a $d$-dimensional convex polytope that is $t$-neighborly but not $(t+1)$-neighborly for some constant $t\geq1$ independent of $d$.\\

2. There are $\Omega \left(2^d {d^{5/2}}\right)$ crossing pairs of hyperedges in a $d$-dimensional rectilinear drawing of $K_{2d}^d$ when its $2d$ vertices form the vertices of a $d$-dimensional convex polytope that is $(\floor{d/2}-t')$-neighborly for some constant $t' \geq 0$ independent of $d$.  

  \keywords{Rectilinear Crossing Number \and Neighborly Polytope \and Gale Transform \and Affine Gale Diagram \and Balanced Line \and $j$-Facet \and $k$-Set} 
  
\end{abstract}
\vspace{-0.5cm}
\section{Introduction}
\label{Intro}
For a sufficiently large positive integer $d$, let $K_{n}^d$ denote the complete $d$-uniform hypergraph with $n \geq 2d$ vertices. A \textit{$d$-dimensional rectilinear drawing} of $K_{n}^d$ is defined as an embedding of it in $\mathbb{R}^d$ such that the vertices of $K_{n}^d$ are points in general position in $\mathbb{R}^d$ and each hyperedge is drawn as the $(d-1)$-simplex formed by the $d$ corresponding vertices \cite{AS}. Note that a set of points in $\mathbb{R}^d$ is said to be in \textit{general position} if no $d+1$ of them lie on a $(d-1)$-dimensional hyperplane. For $u$ and $v$ in the range $0 \leq u, v \leq d-1$, a $u$-simplex spanned by
a set of $u+1$ points and a $v$-simplex spanned by a set of $v+1$ points (when these $u+v+2$ points are in general position in $\mathbb{R}^d$) are said to be {\textit {crossing}} if they do not have common vertices ($0$-faces) and contain a common point in their relative interiors \cite{DP}.  As a result, a pair of hyperedges in a $d$-dimensional rectilinear drawing of  $K_{n}^d$ are said to be \textit{crossing} if they do not have a common vertex  and  contain a common point in their relative interiors \cite{AGS,AS,DP}. The {\textit{$d$-dimensional rectilinear crossing number}} of $K_{n}^d$, denoted by $\overline {cr}_d(K_{n}^d)$, is defined as the minimum number of crossing pairs of hyperedges among all $d$-dimensional rectilinear drawings of $K_{n}^d$ \cite{AGS,AS}. \par
Since the crossing pairs of hyperedges formed by a set containing $2d$ vertices of $K_{n}^d$  are distinct from the crossing pairs of hyperedges formed by another set of $2d$ vertices, it can be observed that $\overline {cr}_d(K_{n}^d) \geq \overline {cr}_d(K_{2d}^d)\dbinom{n}{2d}$ \cite{AS}. The best-known lower bound on $\overline {cr}_d(K_{2d}^d)$ is $\Omega (2^d)$ \cite{AGS}.  Anshu et al. \cite{AGS} also studied a $d$-dimensional rectilinear drawing of $K_{2d}^d$ where all its vertices are placed on the {\textit{$d$-dimensional moment curve}} $\gamma=\{(a,a^2,\ldots, a^d): a \in \mathbb{R}\}$ and proved that the number of crossing pairs of hyperedges is  $\Theta\left(4^d/ \sqrt{d}\right)$ in this drawing. \par
As described above, an improvement of the lower bound on  $\overline {cr}_d(K_{2d}^d)$  improves the lower bound on  $\overline {cr}_d(K_{n}^d)$.
Let us denote the points corresponding to the set of vertices in a $d$-dimensional rectilinear drawing of the hypergraph $K_{2d}^d$ by 
$V = \{v_1, v_2, \ldots, v_{2d}\}$.  
The points in $V$ are said to be in  {\textit{convex position}} if there does not exist any point $v_i \in V$ (for some  $i$ in the range $1\leq i \leq 2d$) such that $v_i$ can be expressed as a convex combination of the points in $V \setminus \{v_i\}$, and such a drawing of $K_{2d}^d$ is called a {\textit{$d$-dimensional convex drawing}} of it \cite{AGS}.  \par
 Note that the convex hull of the vertices of $K_{2d}^d$ in a $d$-dimensional convex drawing of it forms a $\textit{$d$-dimensional convex polytope}$ with its vertices in general position.  For any $t\geq 1$, a \textit{$d$-dimensional $t$-neighborly polytope} is a 
$d$-dimensional convex polytope in which each subset of its vertex set having less than or equal to $t$ vertices forms a face \cite[Page 122]{GRU}.  Such a polytope is said to be  \textit{neighborly}  if it is $\floor{d/2}$-neighborly. Note that any $d$-dimensional convex polytope can be at most $\floor{d/2}$-neighborly unless it is a $d$-simplex which is $d$-neighborly \cite[Page 123]{GRU}. The {\textit{$d$-dimensional cyclic polytope}} is a special kind of neighborly polytope where all its vertices are placed on the $d$-dimensional moment curve \cite[Page 15]{ZIE}. Using these definitions and notations, let us describe our contributions in this paper.

In Section \ref{imprvcross}, we improve the lower bound on $\overline {cr}_d(K_{2d}^d)$. In particular, we prove the following theorem which implies Corollary \ref{thm41}.

\begin{theorem}
\label{thm4}
$\overline {cr}_d(K_{2d}^d)= \Omega(2^d \sqrt{d})$.
\end{theorem}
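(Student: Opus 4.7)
The plan is to retain the Gale-transform framework of Anshu et al., which gives the $\Omega(2^d)$ bound, and to sharpen its combinatorial counting step so as to pick up an extra $\sqrt{d}$ factor. Let $V=\{v_1,\ldots,v_{2d}\}\subset\mathbb{R}^d$ denote the vertex set of a rectilinear drawing of $K_{2d}^d$, let $V^{*}=\{g_1,\ldots,g_{2d}\}\subset\mathbb{R}^{d-1}$ be its Gale transform, and let $\bar V^{*}\subset\mathbb{R}^{d-2}$ be the corresponding signed affine Gale diagram. A standard Gale-duality calculation identifies crossings: a partition $V=F_1\sqcup F_2$ with $|F_1|=|F_2|=d$ produces a crossing pair of $(d-1)$-simplices iff $0\in\mathrm{relint}\,\mathrm{conv}\{g_i:v_i\in F_1\}$, equivalently iff the corresponding balanced sign pattern on $\bar V^{*}$ satisfies a Radon-type separation condition on both sides of the partition. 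Counting crossings thereby reduces to a purely combinatorial question about balanced ``Radon-certifying'' subsets of the (signed) Gale diagram, independent of the original embedding.

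Next I would recast this count in terms of $j$-facets and balanced hyperplanes of $\bar V^{*}$, the higher-dimensional analogues of the balanced lines mentioned in the keywords. The Anshu et al.\ argument, unpacked in this language, uses a single Ham-Sandwich bisection of $\bar V^{*}$ to certify $\Omega(2^d)$ Radon partitions (one per antipodal flip across the bisecting hyperplane). To gain the extra $\sqrt{d}$, the plan is to produce $\Theta(\sqrt{d})$ essentially distinct balanced hyperplanes---either via a parametric/rotational Ham-Sandwich sweep, counting the topological transitions encountered as the bisecting hyperplane is continuously deformed, or by iterating Ham-Sandwich on carefully chosen submeasures of $\bar V^{*}$ so as to generate a family of balanced hyperplanes with controlled pairwise interaction. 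Each such hyperplane contributes its own batch of $\Omega(2^d)$ Radon-certifying partitions.

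The hard part will be controlling the overlap between the batches coming from different balanced hyperplanes, since a naive union bound would not exceed the original $\Omega(2^d)$ estimate. I expect the remedy to be a concentration/Stirling-type estimate: among the $\binom{2d}{d}\sim 4^d/\sqrt{d}$ equipartitions of $V$, the fraction that are Radon-certifying with respect to any one fixed balanced hyperplane should be $\Theta(1/\sqrt{d})$, so $\Theta(\sqrt{d})$ well-separated balanced hyperplanes can be charged to almost-disjoint families of Radon partitions, yielding the claimed $\Omega(2^d\sqrt{d})$ crossing pairs. A secondary issue is keeping the entire argument configuration-free, so that it applies uniformly to every rectilinear drawing rather than only to the moment-curve one, which is the setting where the counterpart upper bound $\Theta(4^d/\sqrt{d})$ is known.
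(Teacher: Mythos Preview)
Your proposal is a plan, not a proof, and its two load-bearing steps are left as conjectures: you do not actually construct the $\Theta(\sqrt{d})$ ``well-separated'' balanced hyperplanes in $\mathbb{R}^{d-2}$, and your overlap control (``I expect the remedy to be a concentration/Stirling-type estimate'') is speculation. Working with the Gale transform of all $2d$ points lands you in $\mathbb{R}^{d-1}$, where no analogue of the Pach--Pinchasi balanced-line count is available, and a rotational Ham-Sandwich sweep in that dimension does not come with any quantitative lower bound on the number of distinct balanced positions. So as written the argument does not close.

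The paper avoids exactly this difficulty by a dimension-reduction trick you are missing: instead of Gale-transforming all of $V$, it fixes a subset $V'\subset V$ of only $d+4$ points, whose Gale transform $D(V')$ lives in $\mathbb{R}^3$ and whose affine Gale diagram $\overline{D(V')}$ is a two-coloured set of $d+4$ points in $\mathbb{R}^2$. A single Ham-Sandwich cut makes the two colour classes nearly equal, and then the Pach--Pinchasi theorem on balanced lines gives not $\Theta(\sqrt d)$ but $\Omega(d)$ distinct almost-balanced lines, each of which corresponds (via Lemma~\ref{bjection}) to a crossing pair of $u$- and $v$-simplices in $V'$ with $u+v+2=d+4$ and $\min\{u+1,v+1\}\ge\lfloor (d+2)/2\rfloor$. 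Because these simplices have dimension only about $d/2$, Lemma~\ref{extension} lets each such crossing be completed to a crossing of full $(d-1)$-simplices in $\binom{d-4}{d-\lfloor(d+2)/2\rfloor}=\Omega(2^d/\sqrt{d})$ ways, and these extensions are automatically distinct for distinct balanced lines. Multiplying $\Omega(d)$ by $\Omega(2^d/\sqrt d)$ gives the bound, with no overlap analysis needed. The point is that the extra $\sqrt{d}$ comes from having \emph{linearly many} balanced separations in the plane, each worth only $2^d/\sqrt d$ after extension, rather than from $\sqrt d$ high-dimensional bisections each worth $2^d$.
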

\begin{corollary}
\label{thm41}
$\overline {cr}_d(K_{n}^d)= \Omega(2^d \sqrt{d})\dbinom{n}{2d}$.
\end{corollary}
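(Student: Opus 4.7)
My plan is to pass to the Gale transform of the vertex set, use Gale duality to characterize crossing pairs as certain ``balanced'' partitions of the dual configuration, and then count these partitions via a Ham--Sandwich / averaging argument refined with a $j$-facet counting step. The extra factor of $\sqrt d$ over the existing $\Omega(2^d)$ bound of Anshu et al.\ should come from a more careful counting than the one used there.

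Let $V = \{v_1, \ldots, v_{2d}\}$ be the vertex set of an arbitrary $d$-dimensional rectilinear drawing of $K_{2d}^d$ in general position in $\mathbb{R}^d$, and let $\bar v_1, \ldots, \bar v_{2d} \in \mathbb{R}^{d-1}$ be its Gale transform. The first step is a standard consequence of Gale duality combined with Radon's theorem: a partition $V = A \sqcup B$ with $|A| = |B| = d$ yields a crossing pair of hyperedges if and only if the origin lies in the relative interior of $\mathrm{conv}\{\bar v_i : i \in A\}$ (equivalently, of $\mathrm{conv}\{\bar v_i : i \in B\}$). This reformulates the theorem as a combinatorial statement about the number of equal-size Radon partitions of the dual configuration, which I can analyze in $\mathbb{R}^{d-1}$, or, after passing to the affine Gale diagram, in $\mathbb{R}^{d-2}$.

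The counting itself would proceed by averaging over perfect matchings of $V$. There are $(2d-1)!!$ such matchings, and for each matching the Ham--Sandwich theorem (suitably applied in the dual setting) supplies a partition $(A,B)$ with one endpoint of each matched pair on each side; each balanced partition $(A,B)$ is compatible with at most $d!$ matchings. Naively this already gives $(2d-1)!!/d! = \binom{2d}{d}/2^d = \Theta(2^d/\sqrt d)$ balanced partitions, which is a factor $d$ short of the target. To reach $\Omega(2^d \sqrt d)$, I would try to show that each matching in fact yields $\Omega(d)$ distinct balanced partitions, corresponding to rotations of the Ham--Sandwich hyperplane across single dual vertices; each such flip changes the combinatorial type of a single $j$-facet and produces a new balanced partition.

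The main obstacle is precisely this last step: proving the $\Omega(d)$ lower bound on distinct balanced partitions per matching, and ensuring that the partitions so produced (i) are pairwise distinct across flips and (ii) actually correspond to geometric crossings via the Gale-dual characterization of the first step. A direct argument in $\mathbb{R}^d$ looks awkward, since the Ham--Sandwich hyperplane lives in a continuous family whose combinatorial structure is not transparent; I expect the cleanest route to go through the affine Gale diagram in $\mathbb{R}^{d-2}$, where balanced partitions correspond to $j$-facets (or to $k$-sets cut off by balanced lines) and where classical combinatorial lower bounds on their number are available. Matching the target $\sqrt d$ factor will hinge on selecting the right variant of such a bound and on carefully tracking which $j$-facets in the dual correspond to crossings that are not already accounted for by other matchings, so that the final double counting is tight.
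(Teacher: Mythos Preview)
Your proposal has a genuine structural gap. You pass to the Gale transform of the \emph{full} set of $2d$ vertices, which lands you in $\mathbb{R}^{d-1}$; the affine Gale diagram is then in $\mathbb{R}^{d-2}$. In that dimension the Ham--Sandwich/matching scheme you sketch does not even get off the ground: Ham--Sandwich in $\mathbb{R}^{d-1}$ bisects only $d-1$ measures, while a perfect matching of $V$ gives $d$ colour classes, so the ``one endpoint on each side'' partition is not guaranteed. Even granting that step, your own computation shows the double count yields only $\Theta(2^d/\sqrt d)$ balanced partitions, which is \emph{below} the previously known $\Omega(2^d)$ bound --- so the improvement would have to come entirely from the unproven ``$\Omega(d)$ distinct partitions per matching via rotations'' step, and in dimension $d-1$ there is no clean combinatorial handle on such rotations.

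The paper's argument avoids all of this by taking the Gale transform of a subset of only $d+4$ vertices, so that the dual configuration lives in $\mathbb{R}^3$ and the affine Gale diagram in $\mathbb{R}^2$. In the plane one can invoke the Pach--Pinchasi lower bound of $\lfloor(d+4)/2\rfloor$ on the number of balanced (bichromatic halving) lines; each such line corresponds to a crossing pair of simplices of dimensions roughly $d/2$ among the chosen $d+4$ points. The extension lemma then blows each of these up to $\binom{d-4}{\lfloor d/2\rfloor}=\Omega(2^d/\sqrt d)$ crossing pairs of full hyperedges, giving $\Omega(d)\cdot\Omega(2^d/\sqrt d)=\Omega(2^d\sqrt d)$. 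The corollary for $K_n^d$ then follows immediately from $\overline{cr}_d(K_n^d)\ge\overline{cr}_d(K_{2d}^d)\binom{n}{2d}$. The key idea you are missing is precisely this: reduce to a subset of size $d+O(1)$ so the dual problem is two-dimensional, where balanced-line bounds are available, and recover the full count via the extension lemma rather than by working directly with the high-dimensional Gale dual.
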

We then prove the following theorems to obtain lower bounds on the number of crossing pairs of hyperedges in some special types of $d$-dimensional rectilinear drawings of  $K_{2d}^d$.
 \begin{theorem}
\label{thm1}
The number of crossing pairs of hyperedges in a $d$-dimensional rectilinear drawing of $K_{2d}^d$ is $\Omega (2^d {d^{3/2}})$ if the vertices of $K_{2d}^d$ are not in convex position.\end{theorem}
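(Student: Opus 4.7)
The plan is to use the Gale transform to reformulate the counting problem in dual terms and then exploit the structural constraint supplied by the non-convex-position hypothesis to amplify the $\Omega(2^d\sqrt{d})$ bound of Theorem \ref{thm4}. Let $\bar V = \{\bar v_1, \dots, \bar v_{2d}\} \subset \mathbb{R}^{d-1}$ denote the Gale transform of $V$. A standard computation shows that a partition $V = A \sqcup B$ with $|A|=|B|=d$ produces a crossing pair of hyperedges if and only if there is a linear hyperplane through the origin in $\mathbb{R}^{d-1}$ strictly separating $\bar V_A$ from $\bar V_B$. Hence the theorem reduces to lower-bounding the number of balanced halving hyperplanes of $\bar V$ through the origin.

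By Gale duality, a vertex $v_{i^*}$ fails to be a vertex of $\mathrm{conv}(V)$ if and only if the origin is not in the interior of $\mathrm{conv}(\bar V \setminus \{\bar v_{i^*}\})$, so this set lies in an open halfspace bounded by a hyperplane through the origin while $\bar v_{i^*}$ sits in the opposite halfspace. Passing to the affine Gale diagram by normalizing to this bounding hyperplane and recording signs produces a signed point configuration in general position in $\mathbb{R}^{d-2}$ with $2d-1$ points of one sign and exactly one point of the opposite sign. Under this reduction, balanced halving hyperplanes through the origin in $\mathbb{R}^{d-1}$ correspond to affine hyperplanes bisecting the $2d$ points into two halves of size $d$, with the sign constraint fixing the half into which the lone negatively signed point must fall.

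The amplification exploits this near-monochromaticity: since $2d-1$ of the affine-Gale points share a single sign, there is a $\Theta(d)$-dimensional family of rotation directions along which one can tilt a balanced separating affine hyperplane in $\mathbb{R}^{d-2}$ before the sign constraint on $\bar v_{i^*}$ becomes binding. Within each such rotation ``slab'' one reapplies the Ham-Sandwich-based argument underlying Theorem \ref{thm4} to produce $\Omega(2^d\sqrt{d})$ distinct balanced bisections, and a bookkeeping argument tracking the side on which $\bar v_{i^*}$ falls establishes that different slabs contribute pairwise distinct crossings in $V$. Summing $\Omega(2^d\sqrt{d})$ crossings over $\Theta(d)$ slabs then yields the claimed $\Omega(2^d d^{3/2})$ bound. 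The hardest part will be this distinctness analysis across the $\Theta(d)$ slabs—ruling out that different rotation directions produce the same halving hyperplane through the origin—together with verifying that the hypotheses of the Theorem \ref{thm4} argument apply uniformly to the restricted sub-configurations forced by the prescribed location of $\bar v_{i^*}$.
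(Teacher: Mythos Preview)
Your proposal has a fundamental gap: you misidentify where the exponential factor $2^d$ comes from in the paper's arguments. You assert that ``within each such rotation slab one reapplies the Ham-Sandwich-based argument underlying Theorem~\ref{thm4} to produce $\Omega(2^d\sqrt d)$ distinct balanced bisections.'' But the proof of Theorem~\ref{thm4} does \emph{not} produce $\Omega(2^d\sqrt d)$ balanced linear separations of any Gale transform. It produces only $\Omega(\sqrt d)$ near-balanced linear separations of the Gale transform of a \emph{subset} $V'\subset V$ of size $d+4$; each such separation corresponds to a single crossing pair of simplices of dimensions roughly $d/2$, which is then extended via Lemma~\ref{extension} to $\binom{d-4}{\,d-\lfloor(d+2)/2\rfloor}=\Omega(2^d/\sqrt d)$ crossing pairs of full $(d-1)$-simplices by distributing the remaining $d-4$ vertices. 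The $2^d$ is a binomial coefficient coming from this extension step, not from any count of halving hyperplanes. Since you work with the Gale transform of the full vertex set $V$, there is no extension step available, and your reduction to ``counting balanced halving hyperplanes of $\bar V$ through the origin'' leaves you needing to exhibit $\Omega(2^d d^{3/2})$ such hyperplanes directly---which is essentially the original problem restated in Gale-dual language, and nothing in your slab/rotation sketch supplies exponentially many of them.

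The paper's actual proof avoids this by taking a subset $V'$ of size $d+5$, so that $D(V')$ lives in $\mathbb{R}^4$ and the affine Gale diagram lives in $\mathbb{R}^3$. Non-convexity (via Carath\'eodory and Lemma~\ref{convexity}) forces the affine diagram to have $d+4$ points of one colour and only one of the other; one then invokes Observation~3 (the $\Omega(s^2)$ lower bound on near-halving $k$-sets in $\mathbb{R}^3$, derived from the halving-triangle bound of Lemma~\ref{HTR}) to obtain $\Omega(d^2)$ near-balanced separations of $D(V')$. Each of these extends via Lemma~\ref{extension} to $\Omega(2^d/\sqrt d)$ crossing pairs of hyperedges, giving $\Omega(d^2)\cdot\Omega(2^d/\sqrt d)=\Omega(2^d d^{3/2})$. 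The two essential ingredients you are missing are (i) passing to a subset of size $d+O(1)$ so that the dual lives in fixed dimension and known $k$-set bounds apply, and (ii) recovering the $2^d$ factor from the extension lemma rather than expecting it to appear in the dual count.
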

\begin{theorem}
\label{thm2}
For any constant $t \geq 1$ independent of $d$, the number of crossing pairs of hyperedges in a $d$-dimensional rectilinear drawing of $K_{2d}^d$ is $\Omega (2^d {d^{3/2}})$ if the vertices of $K_{2d}^d$ are placed as the vertices of a $d$-dimensional $t$-neighborly polytope that is not $(t+1)$-neighborly.
\end{theorem}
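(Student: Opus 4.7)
The plan is to reuse the Gale transform and affine Gale diagram machinery underlying the proof of Theorem~\ref{thm4}, enriched with the structural consequences of the polytope being $t$-neighborly but not $(t+1)$-neighborly. First I would take the Gale transform $\bar V = \{\bar v_1,\dots,\bar v_{2d}\}\subset\mathbb{R}^{d-1}$ of the vertex set $V$ and use the standard fact: a balanced partition $V = A\sqcup B$ with $|A|=|B|=d$ yields a crossing pair of hyperedges if and only if $\{\bar v_i:i\in A\}$ and $\{\bar v_i:i\in B\}$ are strictly separated by a hyperplane through the origin. Picking a suitable normalisation hyperplane in $\mathbb{R}^{d-1}$ then gives a two-coloured affine Gale diagram $\hat V \subset \mathbb{R}^{d-2}$, in which crossings correspond to oriented affine hyperplanes whose positive side contains exactly the white points of $A$ together with the black points of $B$.

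Next I would translate the hypothesis into a structural statement about $\hat V$. Since $V$ is not $(t+1)$-neighborly, some $(t+1)$-subset $S\subset V$ fails to span a face, which in Gale terms means $0 \notin \mathrm{relint\,conv}\{\bar v_i : i \notin S\}$; hence the $2d-t-1$ Gale vectors indexed by $[2d]\setminus S$ lie in a common open halfspace of $\mathbb{R}^{d-1}$. Orienting the normalisation hyperplane along the witnessing normal forces $2d-t-1=\Omega(d)$ of the points in $\hat V$ to receive the same colour (say white), while only the $t+1 = O(1)$ exceptional points indexed by $S$ may carry the opposite colour. Thus $\hat V$ is nearly monochromatic in $\mathbb{R}^{d-2}$, with only a constant number of ``outlier'' points.

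Finally I would count bi-chromatic balanced separating hyperplanes in this near-monochromatic configuration. There are at most $2^{t+1}=O(1)$ ways to distribute the exceptional points between the two sides of an oriented hyperplane; for each such distribution the balance condition $|A| = d$ pins down the required number of white points on each side to within $O(1)$ of $d$. Running the Theorem~\ref{thm4} argument on this setting already produces $\Omega(2^d\sqrt d)$ crossings. I then plan to extract an additional factor of $\Omega(d)$ by iterating the argument over the $\Omega(d)$ essentially distinct sub-configurations obtained by varying which white point acts as a ``pivot'' in the counting (for instance, by swapping the colour of one white point and one exceptional point, producing an alternative valid normalisation of the same $\bar V$ and hence a different family of witnessing hyperplanes). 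Each of these sub-configurations contributes $\Omega(2^d\sqrt d)$ crossings, and because the general-position assumption ensures distinct combinatorial hyperplane types give distinct partitions $\{A,B\}$, the crossing families from the $\Omega(d)$ sub-configurations are almost disjoint, yielding a total of $\Omega(d\cdot 2^d\sqrt d) = \Omega(2^d d^{3/2})$ crossings.

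\textbf{Main obstacle.} The delicate point will be making the multiplicative $d$ factor rigorous: one must verify that the $\Omega(d)$ reattachment/pivot sub-configurations genuinely produce pairwise almost-disjoint crossing families. I expect this to follow from tracking precisely which exceptional and pivot points lie on each side of every witnessing hyperplane and using the general-position hypothesis to rule out coincident combinatorial types, but the combinatorial book-keeping is where the proof will have to work hardest.
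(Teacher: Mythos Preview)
Your outline diverges from the paper's argument at a structural level, and the divergence is precisely where you flag your ``main obstacle.'' The paper does \emph{not} work with the full Gale transform of $V$ in $\mathbb{R}^{d-1}$. Instead it first extracts a subset $V'\subset V$ of only $d+5$ vertices whose convex hull is still $t$-neighborly but not $(t+1)$-neighborly (this requires a short argument using Lemma~\ref{neigh} and Lemma~\ref{extension} applied to the full $D(V)$). The Gale transform $D(V')$ then lives in $\mathbb{R}^4$, and the affine Gale diagram $\overline{D(V')}$ is a set of $d+5$ points in $\mathbb{R}^3$ with $d+4-t$ white and $t+1$ black. On the white points alone, Observation~$3$ (the halving-triangle lower bound) gives $\Omega(d^2)$ near-balanced $k$-sets; via Lemma~\ref{bjection} each one yields a crossing pair of simplices of sizes roughly $d/2$ inside $V'$, and Lemma~\ref{extension} then blows each such pair up into $\binom{d-5}{d-\lceil(d+3-t)/2\rceil}=\Omega(2^d/\sqrt d)$ crossing pairs of full hyperedges by distributing the remaining $d-5$ vertices. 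Multiplying gives $\Omega(d^2)\cdot\Omega(2^d/\sqrt d)=\Omega(2^d d^{3/2})$.

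Your plan, by contrast, stays in the $(d-2)$-dimensional affine diagram of the full vertex set and tries to count balanced partitions there directly. This forces you to manufacture the missing factor of $d$ through the ``pivot'' construction, and the difficulty you anticipate is genuine and, as stated, fatal. The colouring in an affine Gale diagram is dictated by the choice of normalisation hyperplane; one cannot ``swap the colour of one white point and one exceptional point'' at will, because a single swap does not in general come from any linear hyperplane in $\mathbb{R}^{d-1}$. Even if you could produce $\Omega(d)$ alternative normalisations, distinct normalisations of the \emph{same} Gale transform encode exactly the same set of linear separations and hence the same set of crossings, so re-normalising cannot create new ones; your disjointness claim has no mechanism behind it. The paper sidesteps all of this by pushing the combinatorics down to a fixed three-dimensional picture, where the $\Omega(d^2)$ factor comes from a known $k$-set bound and the exponential factor comes from Lemma~\ref{extension}, with no disjointness bookkeeping required.
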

\begin{theorem}
\label{thm3}
For any constant $t' \geq 0$ independent of $d$, the number of crossing pairs of hyperedges in a $d$-dimensional rectilinear drawing of $K_{2d}^d$ is $\Omega (2^d d^{5/2})$ if the vertices of $K_{2d}^d$ are placed as the vertices of a $d$-dimensional $(\floor{d/2}-t')$-neighborly polytope. 
\end{theorem}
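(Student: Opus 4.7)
The plan is to extend the Gale transform / affine Gale diagram machinery already used for Theorems \ref{thm4}, \ref{thm1}, and \ref{thm2}, and push the bookkeeping further by exploiting the very strong neighborliness hypothesis. Let $V\subset \mathbb{R}^d$ be the $2d$ vertices, let $\widehat V\subset \mathbb{R}^{d-1}$ be its Gale transform, and let the associated affine Gale diagram be the signed point set $\widetilde V\subset \mathbb{R}^{d-2}$. Under the standard dictionary, every crossing pair of disjoint $d$-hyperedges (together using all of $V$) is encoded by a $(d,d)$ balanced partition of $\widehat V$, equivalently a balanced oriented hyperplane separating the two colour classes of $\widetilde V$. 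Counting crossings therefore reduces to counting such balanced hyperplanes of $\widetilde V$, for which the relevant combinatorial tools are $j$-facets, $k$-sets, and balanced lines.

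Next I would feed in the $(\floor{d/2}-t')$-neighborliness via Gale duality: a $d$-polytope on $2d$ vertices is $k$-neighborly if and only if every open linear halfspace of $\widehat V$ contains at least $d-k$ Gale vectors, so here every such halfspace contains at least $\ceil{d/2}+t'$ of the $2d$ points. Equivalently, $\widetilde V$ is forced to be almost perfectly colour-balanced across every hyperplane direction. In the $t$-neighborly regime of Theorem \ref{thm2} one only gets $t+1$ points of the minority sign on each side, so the jump from $\Theta(1)$ to $\Theta(d)$ minority points per halfspace is precisely the source of the extra factor of $d$ between $\Omega(2^d d^{3/2})$ and $\Omega(2^d d^{5/2})$.

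Concretely I would (i) prove a counting lemma stating that, for any signed point set in $\mathbb{R}^{d-2}$ in which every open halfspace contains $\Theta(d)$ points of each colour, a $\Theta(d)$-fraction of the generic hyperplanes are balanced; (ii) combine this with the $\Omega(\binom{2d}{d}) = \Omega(4^d/\sqrt{d})$ lower bound on the number of candidate partitions that already appears in the proof of Theorem \ref{thm4}; and (iii) account for the $\Theta(2^d/\sqrt d)$ density factor that the Gale framework imposes when translating balanced hyperplanes back into crossing hyperedge pairs. Steps (ii) and (iii) are essentially bookkeeping built on the earlier theorems.

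The main obstacle is step (i). The affine Gale diagram lives in dimension $d-2$, so naive high-dimensional halving-hyperplane bounds are far too weak. I anticipate the cleanest route is to project $\widetilde V$ onto a random two-dimensional linear subspace, verify that the neighborliness-induced halfspace balance is inherited by a $\Theta(d)$ fraction of such projections, and then invoke the planar $\Omega(n\log n)$ lower bound on halving lines together with the $\Omega(d)$ gain from the number of essentially distinct projection directions. Making this projection argument genericity-preserving, and ensuring that the resulting balanced lines lift back to genuinely distinct crossing partitions in $\mathbb{R}^d$ rather than being collapsed by the projection, will be the technical heart of the proof.
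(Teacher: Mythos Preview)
Your plan works with the Gale transform of the full vertex set $V$, which lands you in $\mathbb{R}^{d-1}$ (affine diagram in $\mathbb{R}^{d-2}$) and forces you into the speculative random-projection argument of step~(i). The paper avoids this entirely: it passes to a subset $V'\subset V$ of only $d+5$ vertices, so that $D(V')\subset\mathbb{R}^4$ and the affine Gale diagram $\overline{D(V')}$ is a set of $d+5$ points in the \emph{fixed} dimension $\mathbb{R}^3$. There one can invoke the $(\le j)$-facet lower bound of Lemma~\ref{ALCH} (Observation~4) to get $\Omega(d^3)$ many $(\le\lceil(d+5)/4\rceil)$-sets of $\overline{D(V')}$, with no projection tricks needed. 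The sub-polytope on $V'$ inherits $(\lfloor d/2\rfloor-t')$-neighborliness, so by Lemma~\ref{neigh} every linear separation of $D(V')$ has at least $\lfloor d/2\rfloor-t'+1$ vectors on each side; hence each of the $\Omega(d^3)$ separations yields (Lemma~\ref{bjection}) a crossing $u$-simplex/$v$-simplex pair with $\min\{u+1,v+1\}\ge\lfloor d/2\rfloor-t'+1$, and each extends via Lemma~\ref{extension} to $\binom{d-5}{d-\lfloor d/2\rfloor+t'-1}=\Omega(2^d/\sqrt d)$ crossing hyperedge pairs. Multiplying gives $\Omega(2^d d^{5/2})$.

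Beyond missing the reduction-to-a-subset idea, your bookkeeping in steps~(ii) and~(iii) is off. No $\Omega\bigl(\binom{2d}{d}\bigr)$ bound appears in the proof of Theorem~\ref{thm4}; that theorem gives $\Omega(2^d\sqrt d)$, obtained from $\Omega(d)$ almost balanced lines times an extension factor, not from $4^d/\sqrt d$. And if you Gale-transform all of $V$, then by Lemma~\ref{bjection} balanced linear separations of $\widehat V$ are already in bijection with crossing pairs of full $(d-1)$-simplices, so there is no further ``$\Theta(2^d/\sqrt d)$ density factor'' to apply---that factor arises precisely because the paper works with a \emph{subset} and then extends. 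Finally, the Gale-dual statement of $k$-neighborliness (Lemma~\ref{neigh}) is that every open linear halfspace contains at least $k+1$ vectors, not $d-k$; for $k=\lfloor d/2\rfloor-t'$ both are $\Theta(d)$, so this does not affect the asymptotics, but it is worth getting right.
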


\noindent Note that the $t'=0$ case in Theorem \ref{thm3} corresponds to a neighborly polytope. As mentioned above, the number of crossing pairs of hyperedges in a $d$-dimensional rectilinear drawing of $K_{2d}^d$ is known to be $\Theta(4^d/\sqrt{d})$ when such a polytope is cyclic.\\

\noindent {\bf Techniques Used:} We use the properties of Gale transform, affine Gale diagram, $k$-sets and balanced lines to prove Theorems 
\ref{thm4}, \ref{thm1}, \ref{thm2} and \ref{thm3}. We discuss these properties in detail in Sections \ref{TU} and \ref{TU1}. In addition, a few other results used in these proofs are mentioned below.
Let us first state  the Ham-Sandwich theorem and the Caratheodory's theorem  that are used in the proofs of Theorems \ref{thm4} and \ref{thm1}, respectively.\\
 
 \noindent{\textbf {Ham-Sandwich Theorem:}} \cite{JM,STO}  There exists a $(d-1)$-hyperplane $h$ which simultaneously bisects $d$ 
finite point sets $P_1, P_2, \ldots, P_d$ in $\mathbb{R}^d$, such that each of 
the 
open half-spaces created by $h$ contains at most 
$\left\lfloor\small{{|P_i|}/{2}}\right\rfloor$ 
points of $P_i$ for each $i$ in the range $1 \leq i \leq d$.\\

\noindent{\textbf {Carath\'{e}odory's Theorem:}} \cite{JM,ST} Let $X \subseteq \mathbb{R}^d$. Then, each point in the convex hull $Conv(X)$ of $X$ can be expressed as a convex combination of at most $d+1$ points in $X$.\\

\noindent In the following, we state the Proper Separation theorem that is used in the proof of Lemma \ref{extension}. Two non-empty convex sets are said to be \textit{properly separated} in $\mathbb{R}^d$ if they lie in the opposite closed half-spaces created by a $(d-1)$-dimensional hyperplane and both of them are not contained in the hyperplane.  \\

\noindent{\textbf {Proper Separation Theorem:}} \cite[Page 148]{OG} Two non-empty convex sets can be properly separated in $\mathbb{R}^d$ if and only if  their relative interiors are disjoint.\\

\noindent The proof of the following lemma, which is used in the proofs of all four theorems, is the same proof mentioned in \cite{AGS} for the special case $u=v=d-1$. For the sake of completeness, we repeat its proof in full generality.
\begin{lemma}\cite{AGS}
\label{extension} 
Consider a set $A$ that contains at least $d+1$ points in general position in $\mathbb{R}^d$. Let $B$ and $C$ be its disjoint subsets such that $|B|= b$, $|C|= 
c$, $2 \leq b,c \leq d$ and $b+c \geq d+1$. If the $(b-1)$-simplex formed by 
$B$ and the
$(c-1)$-simplex formed by $C$ form a crossing pair, then the $u$-simplex ($u \geq b-1$) formed by a point set $B'\supseteq B$ and  the $v$-simplex ($v \geq c-1$) formed by a point set $C' \supseteq C$ satisfying $B' \cap C' = \emptyset$, $|B'|, |C'| \leq d$  and $B', C' \subset A$ also form a crossing pair.
\end{lemma}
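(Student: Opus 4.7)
The plan is to argue by contradiction using the Proper Separation Theorem. Suppose that the $u$-simplex $\mathrm{conv}(B')$ and the $v$-simplex $\mathrm{conv}(C')$ do not form a crossing pair; since they are vertex-disjoint and non-empty, this means their relative interiors must be disjoint. By the Proper Separation Theorem, there then exists a $(d-1)$-dimensional hyperplane $h$ such that $\mathrm{conv}(B') \subseteq H^{+}$ and $\mathrm{conv}(C') \subseteq H^{-}$, where $H^{+}, H^{-}$ denote the two closed half-spaces bounded by $h$, and at least one of the two sets is not entirely contained in $h$.

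Next, I would exploit the hypothesis that the smaller simplices on $B$ and $C$ cross: pick a point $p$ in the common relative interior of $\mathrm{conv}(B)$ and $\mathrm{conv}(C)$. Since $B \subseteq B'$ and $C \subseteq C'$, this point lies in $\mathrm{conv}(B') \cap \mathrm{conv}(C') \subseteq H^{+} \cap H^{-} = h$, so $p \in h$.

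The crucial step is upgrading $p \in h$ to $B \cup C \subseteq h$. Let $f$ be a non-zero affine functional vanishing on $h$ and non-negative on $H^{+}$. Since $B \subseteq B' \subseteq H^{+}$, we have $f(v) \geq 0$ for every $v \in B$. Because $p$ lies in the relative interior of $\mathrm{conv}(B)$, I can write $p = \sum_{v \in B} \lambda_v v$ with $\lambda_v > 0$ and $\sum_{v \in B} \lambda_v = 1$. Evaluating $f$ gives
\[
0 \;=\; f(p) \;=\; \sum_{v \in B} \lambda_v f(v),
\]
and since all $\lambda_v$ are strictly positive and all $f(v)$ are non-negative, this forces $f(v) = 0$ for every $v \in B$, i.e., $B \subseteq h$. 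The symmetric argument with $-f$ on $C \subseteq C' \subseteq H^{-}$ gives $C \subseteq h$.

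To conclude, because $B$ and $C$ are disjoint, the $(d-1)$-dimensional hyperplane $h$ contains at least $|B| + |C| = b + c \geq d+2$ points of $A$, contradicting the general position assumption on $A$, which forbids $d+1$ points lying on a common $(d-1)$-hyperplane. The only delicate point I anticipate is the verification that the shared relative interior point of the original crossing survives the enlargement to $B'$ and $C'$ in such a way that it must land on the separating hyperplane; once that is nailed down, the positive-combination argument and the general-position contradiction are routine.
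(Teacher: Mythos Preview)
Your argument is correct and follows essentially the same approach as the paper: assume the enlarged simplices do not cross, invoke the Proper Separation Theorem to place $\mathrm{conv}(B')$ and $\mathrm{conv}(C')$ in opposite closed half-spaces of a hyperplane $h$, and then use the crossing of the smaller simplices to force all of $B\cup C$ onto $h$, contradicting general position. Your version is slightly more streamlined in that you apply proper separation directly (without a separate disjoint-hulls case) and you spell out the positive-barycentric-coordinate argument for $B\cup C\subseteq h$ that the paper leaves implicit.
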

 \begin{proof}
For the sake of contradiction, we assume that there exist a $u$-simplex and a $v$-simplex, formed respectively by the disjoint point sets $B' \supseteq B$ and $C' \supseteq C$, that do not cross. We consider two cases.
\begin{case**}
{\normalfont Let us assume that $Conv(B') \cap Conv(C') = \emptyset$. It clearly leads to a contradiction since $Conv(B) \cap Conv(C) \ne \emptyset$.}
\end{case**}
\begin{case**}
{\normalfont Let us assume that $Conv(B') \cap Conv (C') \neq \emptyset$. Since the relative interiors of $Conv(B')$ and $Conv(C')$ are disjoint, the Proper Separation theorem implies that there exists a $(d-1)$-dimensional hyperplane $h$ such that $Conv(B')$ and $Conv(C')$ lie in the opposite closed half-spaces determined by $h$.   It implies that $Conv(B)$ and $Conv(C)$ also lie in the opposite closed half-spaces created by $h$. Since the relative interiors of $Conv(B)$ and $Conv(C)$ are not disjoint and they lie in the opposite closed halfspaces of $h$, it implies that all $b+c \geq d+1$ points in $B \cup C$ lie on $h$.  This leads to a contradiction since the points in $B \cup C$ are in general position in $\mathbb{R}^d$.} \qed
\end{case**}
\end{proof}
\vspace{-0.5cm}
\section{Gale Transform and its Properties}
\label{TU}
The {\textit{Gale transformation}} is a useful technique to investigate the properties of high dimensional point sets \cite{GL}. Consider a sequence of $m>d+1$ points $P=$ $<p_1,p_2, \ldots, p_m>$ in $\mathbb{R}^d$ such that the affine hull of the points is $\mathbb{R}^d$. Let the  $i^{th}$ point $p_i$ be represented as $(x_1^i, x_2^i, \ldots, x_d^i)$. To compute a {\textit{Gale transform}}  of $P$, let us consider the $(d+1)\times m$ matrix $M(P)$ whose $i^{th}$ column is 
$\begin{pmatrix}
  x_1^i \\
  x_2^i \\
  \vdots \\
  x_d^i \\
  1
\end{pmatrix}
$.
Since there exists a set of $d+1$ points in $P$ that is affinely independent, the rank of  $M(P)$ is $d+1$. Therefore, the dimension of the null space of $M(P)$ is $m-d-1$.  Let $\{(b_1^1, b_2^1, \ldots, b_m^1),$  $(b_1^2, b_2^2, \ldots, 
b_m^2), \ldots,(b_1^{m-d-1}, b_2^{m-d-1},$ $\ldots, b_m^{m-d-1}) \}$ be a set of $m-d-1$ vectors that spans the null space of $M(P)$. A Gale transform $D(P)$ is the sequence of vectors $D(P)$ $=$ $<g_1,g_2, \ldots, g_m>$ where $g_i= (b_i^1$, $b_i^2$, $\ldots, 
b_i^{m-d-1})$ for each  $i$ satisfying $1 \leq i \leq m$. Note that $D(P)$ can also be treated as a point sequence in $\mathbb{R}^{m-d-1}$.\par 
We define a {\textit{linear separation}}  of $D (P)$ to be a partition of the vectors in $D(P)$  into two disjoint sets of vectors $D^+(P)$ and $D^-(P)$ contained in the opposite open half-spaces created by a linear hyperplane (i.e., a hyperplane passing through the origin). A linear separation is said to be \textit{proper} if one of the sets among  $D^+(P)$ and $D^-(P)$ contains $\floor{{m}/{2}}$ vectors and the other contains $\ceil{{m}/{2}}$ vectors.  We use the following properties of $D(P)$ in the proofs of our theorems. The first two of these properties are used in the proofs of all four theorems. The third property is used in the proof of Theorem \ref{thm1} and the last one is used in the proof of Theorem \ref{thm2}.

\begin{lemma}\cite[Page 111]{JM}
\label{genposi}
If the points 
in $P$ are in general position in $\mathbb{R}^d$, each collection of $m-d-1$ vectors in $D(P)$ spans $\mathbb{R}^{m-d-1}$. 
\end{lemma}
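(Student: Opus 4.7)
The plan is to reformulate the claim as a statement about minors, and then use the duality between $M(P)$ and the matrix $G$ whose columns form a basis of the null space of $M(P)$. Let $G$ be the $m \times (m-d-1)$ matrix whose $i^{th}$ row is $g_i$, so that $M(P)\,G = 0$ and the columns of $G$ form a basis of $\ker M(P)$. Fixing any index set $I \subseteq \{1, \ldots, m\}$ with $|I| = m-d-1$, the vectors $\{g_i : i \in I\}$ span $\mathbb{R}^{m-d-1}$ if and only if the $(m-d-1) \times (m-d-1)$ submatrix $G_I$ (of rows indexed by $I$) is nonsingular. So the task reduces to showing that $\det(G_I) \neq 0$ for every such $I$.

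Next I would translate the general-position hypothesis into an algebraic statement about the complementary index set $J := \{1, \ldots, m\} \setminus I$, which has size $d+1$. The $d+1$ points $\{p_j : j \in J\}$ lie on a common $(d-1)$-dimensional affine hyperplane $\{x \in \mathbb{R}^d : a^T x = b\}$ exactly when the nontrivial vector $(a^T, -b)$ annihilates each column $(p_j^T, 1)^T$ of $M(P)$ with $j \in J$. That is to say, the $(d+1) \times (d+1)$ submatrix $M_J$ (of columns indexed by $J$) is singular if and only if some $d+1$ points of $P$ lie on a common hyperplane. Since $P$ is in general position, $M_J$ is invertible for every choice of $J$.

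Finally I would exhibit one convenient basis of $\ker M(P)$ for the fixed partition $I \sqcup J$, and exploit uniqueness up to change of basis. Writing $M(P) = [M_J \mid M_I]$ after reordering columns, the invertibility of $M_J$ lets me define a block-structured basis matrix
\[
G^{(0)} \;=\; \begin{pmatrix} -M_J^{-1} M_I \\ I_{m-d-1} \end{pmatrix},
\]
whose columns lie in $\ker M(P)$ and are linearly independent, hence span $\ker M(P)$. By construction $G^{(0)}_I = I_{m-d-1}$, so $\det(G^{(0)}_I) = 1 \neq 0$. Any other basis $G$ of $\ker M(P)$ is of the form $G = G^{(0)} T$ for some invertible $(m-d-1) \times (m-d-1)$ matrix $T$, and hence $G_I = T$ is also invertible. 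This gives $\det(G_I) \neq 0$ for every $I$ of size $m-d-1$, which is precisely the claim.

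The main obstacle, if any, is a conceptual one rather than a computational one: recognising the duality that equates the nonsingularity of an $(m-d-1) \times (m-d-1)$ minor of $G$ with that of the complementary $(d+1) \times (d+1)$ minor of $M(P)$. Once this correspondence is in place, the proof is a two-line linear-algebra consequence of general position.
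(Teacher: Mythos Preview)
Your argument is correct. The paper itself does not supply a proof of this lemma: it cites \cite[Page 111]{JM} and remarks only that ``the proofs of the first three lemmas are easy to see.'' What you have written is precisely the standard complementary-minor duality argument (an $(m-d-1)\times(m-d-1)$ row-minor of a kernel basis $G$ is nonsingular if and only if the complementary $(d+1)\times(d+1)$ column-minor of $M(P)$ is), and your construction of the block basis $G^{(0)}$ makes the ``only if'' direction used here explicit. There is no alternative approach in the paper to compare against; your proof simply fills in the omitted details.
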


\begin{lemma}\cite[Page 111]{JM}
\label{bjection}
Consider two integers $u$ and $v$ satisfying  $1 \leq u, v \leq d-1$ and $u+v+2 = m$. If the points in $P$ are in general 
position in $\mathbb{R}^d$, there exists a 
bijection between the crossing pairs of $u$- and $v$-simplices formed by some points
in $P$ and the linear separations of  $D(P)$ into $D^+(P)$ and $D^-(P)$ 
such that $|D^+(P)| = u+1$ and $|D^-(P)| = v+1$.  
\end{lemma}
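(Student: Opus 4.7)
The plan is to exhibit the bijection explicitly by going through the null space of $M(P)$, which sits in the middle between crossing configurations in $\mathbb{R}^d$ and linear separations in $\mathbb{R}^{m-d-1}$. The key observation is that the $(m-d-1)\times m$ matrix whose columns are the Gale vectors $g_1, \ldots, g_m$ has, by construction, row space equal to $\ker M(P)$. Consequently, a vector $\lambda = (\lambda_1, \ldots, \lambda_m) \in \mathbb{R}^m$ lies in $\ker M(P)$ if and only if there is a linear functional $y \in \mathbb{R}^{m-d-1}$ with $\lambda_i = \langle y, g_i\rangle$ for every $i$. Translating the sign pattern of $\lambda$ to the sides of the linear hyperplane $y^\perp$ is what converts affine information about $P$ into linear information about $D(P)$.

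For the forward map, I would take a crossing pair formed by a $u$-simplex on indices $A$ with $|A| = u+1$ and a $v$-simplex on indices $B$ with $|B| = v+1$; since the simplices are vertex-disjoint and $|A|+|B| = u+v+2 = m$, the sets $A,B$ partition $\{1,\dots,m\}$. A common relative interior point can be written simultaneously as $\sum_{i \in A}\alpha_i p_i$ and $\sum_{j \in B}\beta_j p_j$ with all $\alpha_i,\beta_j > 0$ and each sum equal to $1$. Setting $\lambda_i = \alpha_i$ on $A$ and $\lambda_j = -\beta_j$ on $B$ produces a vector satisfying $\sum_k \lambda_k p_k = 0$ and $\sum_k \lambda_k = 0$, i.e., $\lambda \in \ker M(P)$. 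The associated functional $y$ then yields a linear separation of $D(P)$ with $D^+(P) = \{g_i : i \in A\}$ and $D^-(P) = \{g_j : j \in B\}$ of the required sizes; strictness of the separation follows from $\lambda_i \neq 0$ for every $i$.

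For the backward map, I would start from a linear separation witnessed by $y$, define $\lambda_i = \langle y, g_i\rangle$, and use $\lambda \in \ker M(P)$ to read off both $\sum \lambda_k p_k = 0$ and $\sum \lambda_k = 0$. Splitting $A = \{i : \lambda_i > 0\}$ from $B = \{j : \lambda_j < 0\}$ and normalizing by $S = \sum_{i \in A}\lambda_i$ gives coefficients $\alpha_i = \lambda_i/S$ and $\beta_j = -\lambda_j/S$ that are strictly positive and sum to $1$; the relation $\sum \lambda_k p_k = 0$ then produces a common point $x = \sum \alpha_i p_i = \sum \beta_j p_j$ lying in both relative interiors. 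General position of $P$ ensures that $\{p_i : i \in A\}$ and $\{p_j : j \in B\}$ are affinely independent (the relevant simplices are genuine), and Lemma \ref{genposi} together with strictness of the separation rules out degenerate borderline cases.

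Finally, I would verify that the two constructions are mutually inverse at the level of index partitions: the map from crossing pairs to linear separations and back returns the same pair $(A, B)$, since in both directions the partition is dictated by the sign pattern of the same element of $\ker M(P)$. The main technical point to handle carefully will be the use of general position, which is needed at three distinct places: to guarantee that the column sets of $M(P)$ restricted to $A$ and to $B$ have full rank, to ensure that the common point produced in the backward direction lies in the relative interior rather than on the boundary of either simplex, and to guarantee that every linear separation of $D(P)$ produced by the $\lambda$-construction is strict. These are exactly the situations Lemma \ref{genposi} is tailored for, so the routine but careful check is the main (and only mild) obstacle.
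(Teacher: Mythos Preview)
Your argument is correct and is precisely the standard proof: affine dependencies among the $p_i$ (equivalently, elements of $\ker M(P)$) correspond to linear functionals on $\mathbb{R}^{m-d-1}$ via the defining property of the Gale basis, and the sign pattern of such a dependency simultaneously encodes the Radon partition on the $P$ side and the open half-space partition on the $D(P)$ side. The paper does not supply its own proof of this lemma; it cites Matou\v{s}ek and remarks that ``the proofs of the first three lemmas are easy to see,'' and what you have written is exactly the argument one finds there. The only point worth tightening in a final write-up is the implicit use of $u,v\le d$ (so that $|A|,|B|\le d+1$ and general position indeed yields affine independence), but this is required for the statement to be meaningful in the first place.
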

\begin{lemma}\cite[Page 111]{JM}
\label{convexity}
The points in $P$ are in convex position in $\mathbb{R}^d$ if 
and only if there 
is 
no linear hyperplane  with exactly one vector from $D(P)$ in one of the open half-spaces created by it.
\end{lemma}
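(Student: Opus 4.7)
The plan is to exploit the standard correspondence between the null space of $M(P)$ and linear hyperplanes in the Gale picture. First I would observe that by construction of the Gale transform, if $b^1,\ldots,b^{m-d-1}$ is the null-space basis used to define $g_1,\ldots,g_m$, then for every $c=(c_1,\ldots,c_{m-d-1})\in\mathbb{R}^{m-d-1}$ the vector $\mu:=\sum_k c_k b^k$ lies in $\mathrm{Null}(M(P))$ and its $i$-th coordinate is $\mu_i=\sum_k c_k b_i^k=\langle c,g_i\rangle$. This gives a linear bijection between $\mathbb{R}^{m-d-1}$, viewed as the space of normals to linear hyperplanes in the Gale picture, and $\mathrm{Null}(M(P))$, and in particular the sign pattern of $(\langle c,g_1\rangle,\ldots,\langle c,g_m\rangle)$ ranges, as $c$ varies, over exactly the sign patterns of vectors in the null space. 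Since the last row of $M(P)$ is all ones, the elements of $\mathrm{Null}(M(P))$ are precisely the affine dependences $\sum_i \mu_i p_i=0$ with $\sum_i \mu_i=0$.

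For the forward direction I would argue the contrapositive. Suppose some linear hyperplane in $\mathbb{R}^{m-d-1}$ with normal $c$ has exactly one vector of $D(P)$, say $g_i$, in one of its open half-spaces. After possibly replacing $c$ by $-c$, this means $\langle c,g_i\rangle<0$ and $\langle c,g_j\rangle\geq 0$ for all $j\neq i$. The associated null-space vector $\mu$ then satisfies $\mu_i<0$, $\mu_j\geq 0$ for $j\neq i$, and $\sum_j \mu_j=0$. Dividing the corresponding affine dependence by $-\mu_i>0$ exhibits $p_i$ as a convex combination of the remaining points of $P$, contradicting convex position.

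For the reverse direction I would do the opposite manipulation. If $P$ is not in convex position, pick any $p_i$ expressible as a convex combination $\sum_{j\neq i}\lambda_j p_j$ with $\lambda_j\geq 0$ and $\sum_{j\neq i}\lambda_j=1$. Then setting $\mu_i=-1$ and $\mu_j=\lambda_j$ for $j\neq i$ produces an element of $\mathrm{Null}(M(P))$, and inverting the bijection yields a vector $c\in\mathbb{R}^{m-d-1}$ whose associated linear hyperplane has $g_i$ as the unique element of $D(P)$ strictly on one side.

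The main obstacle I anticipate is simply setting up the correspondence cleanly; once the bijection between null-space sign patterns and open half-space incidences is in place, each direction of the equivalence reduces to a short sign chase translating a convex combination on the primal side into an isolated Gale vector on the dual side.
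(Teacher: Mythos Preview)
Your argument is correct and is exactly the standard proof one finds in the cited reference. Note, however, that the paper itself does not supply a proof of this lemma: it simply cites Matou\v{s}ek and remarks that ``the proofs of the first three lemmas are easy to see.'' So there is nothing in the paper to compare against beyond that remark; your write-up is precisely the sign-chase through the null space of $M(P)$ that makes the ``easy to see'' explicit, and every step (the bijection $c\mapsto \mu$ with $\mu_i=\langle c,g_i\rangle$, the use of $\sum_i\mu_i=0$, and the translation between an isolated Gale vector and a redundant primal point) is sound.
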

\begin{lemma}\cite[Page 126]{GRU}
\label{neigh}
If the points in $P$ are in convex position, the $d$-dimensional polytope formed by the convex hull of the points in $P$ is $t$-neighborly if and only if each of the open half-spaces created by a linear hyperplane  contains at least $t+1$ vectors of $D(P)$.
\end{lemma}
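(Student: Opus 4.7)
The plan is to derive the lemma from the standard Gale-duality characterization of faces: a subset $F$ of the vertices of the polytope $\mathrm{conv}(P)$ is a face if and only if the origin lies in the relative interior of $\mathrm{conv}(D(P)\setminus D(F))$, where $D(F)$ denotes the sub-sequence of Gale vectors corresponding to $F$. Once this characterization is in place, $t$-neighborliness translates directly into the half-space condition on $D(P)$.

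First I would establish the face--Gale correspondence using the duality between the row space and the null space of $M(P)$. By construction of the Gale transform, a vector $\nu \in \mathbb{R}^m$ satisfies $\sum_i \nu_i g_i = 0$ precisely when $\nu$ lies in the row space of $M(P)$, i.e.\ when $\nu_i = \ell(p_i)+c$ for some linear functional $\ell$ on $\mathbb{R}^d$ and constant $c$. A supporting hyperplane exposing $F$ as a face is exactly such an $(\ell,c)$ with $\ell(p_i)+c \geq 0$ for every $i$ and equality iff $p_i \in F$. Reading the duality in this way, $F$ is a face exactly when there exist coefficients $\mu_i \geq 0$, strictly positive on the complement of $F$ and zero on $F$, with $\sum_i \mu_i g_i = 0$, which is the condition that $0$ lies in the relative interior of $\mathrm{conv}(D(P)\setminus D(F))$.

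Second, I would use Lemma~\ref{genposi} to promote ``relative interior'' to ``interior''. Since a $t$-neighborly $d$-polytope that is not a simplex has $t \leq \floor{d/2}$, the set $F$ with $|F| \leq t$ is small enough that $D(P)\setminus D(F)$ still contains more than $m-d-1$ vectors, and by Lemma~\ref{genposi} these span $\mathbb{R}^{m-d-1}$. Hence the convex hull in question is full-dimensional, so $0$ lies in its interior if and only if every open half-space of every linear hyperplane through $0$ meets $D(P)\setminus D(F)$.

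Finally, both directions of the iff follow by a short counting argument. Assuming $\mathrm{conv}(P)$ is $t$-neighborly and $h$ is any linear hyperplane, let $F$ be the set of vertices whose Gale vectors lie in one open half-space $h^+$; if $|F|\leq t$ then $F$ is a face, and the characterization above forces some vector of $D(P)\setminus D(F)$ into $h^+$, contradicting the choice of $F$, so $h^+$ must contain at least $t+1$ Gale vectors. Conversely, if every open half-space contains at least $t+1$ vectors, then for any $F$ with $|F|\leq t$ the complement $D(P)\setminus D(F)$ loses at most $t$ vectors, so every open half-space still meets it and $F$ is a face. The principal obstacle I anticipate is the precise bookkeeping in the face--Gale step, in particular securing strict (rather than merely non-strict) positivity of the dual coefficients $\mu_i$ on every element of $D(P)\setminus D(F)$; this is exactly what the ``equality iff $p_i \in F$'' clause of the supporting-hyperplane condition provides.
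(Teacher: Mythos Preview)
Your argument is correct and follows essentially the same route that the paper sketches. The paper does not give a full proof either; it simply says the lemma can be obtained from the standard Gale--duality fact that a $t$-element subset $P' \subset P$ is a face of $\mathrm{conv}(P)$ if and only if the origin lies in $\mathrm{conv}\big(D(P)\setminus D(P')\big)$, together with Lemma~\ref{bjection}. You prove exactly this face--Gale correspondence (with the correct ``relative interior'' refinement) and then run the same half-space counting argument; your use of Lemma~\ref{genposi} to pass from relative interior to interior is the point that makes the contradiction in the forward direction go through, and is implicit in the paper's one-line sketch. The only cosmetic difference is that you do not invoke Lemma~\ref{bjection} at all, which shows it is not actually needed once the face characterization is available.
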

The proofs of the first three lemmas are easy to see. The fourth lemma, a generalized version of which is given as an exercise in \cite{GRU}, can be proved using Lemma \ref{bjection} and the fact \cite[Page 111]{JM} that any $t$-element subset $P'= \{p_{i_1},p_{i_2}, \ldots,$ $ p_{i_t}\}  \subset P$ forms a $(t-1)$-dimensional face of the  $d$-dimensional polytope formed by the convex hull of the  points in $P$ if and only if the convex hull of the points in $D(P)\setminus \{g_{i_1},g_{i_2}, \ldots, g_{i_t}\}$ contains the origin.

We obtain an {\textit{affine Gale diagram}} \cite[Page 112]{JM} of $P$ by considering a hyperplane $\bar{h}$ that is not parallel to any vector in $D(P)$ and  not passing through the origin. Since the points in $P$ are in general position in $\mathbb{R}^d$, Lemma \ref{genposi} implies that $g_i \neq 0$ for every $i$. For each $i$ in the range $1\leq i \leq m$, we extend the vector $g_i \in D(P)$ either in the direction of $g_i$ or in its opposite direction until it cuts $\bar{h}$ at the point $\overline {g_i}$.  We color $\overline {g_i}$ as \textit{white} if the projection is in the direction of $g_i$, and \textit{black} otherwise. The sequence of $m$ points $\overline{D(P)}$ $=$ $< \overline {g_1}, \overline {g_2}, \ldots, \overline {g_m}>$ in $\mathbb{R}^{m-d-2}$ along with the color of each point is defined as an affine Gale diagram of $P$. We define a {\textit {partition}} of the points in $\overline{D(P)}$ as two disjoint sets of points $\overline{D^+(P)}$ and $\overline{D^-(P)}$ contained in the opposite open half-spaces created by a hyperplane. Let us restate  Lemma \ref{bjection} using these definitions and notations.
\begin{lemma}\cite{JM}
\label{bjection1}
Consider two integers  $u$ and $v$ satisfying  $1 \leq u, v \leq d-1$ and $u+v+2 = m$. 
If the points in $P$ are in general 
position in $\mathbb{R}^d$, there exists a 
bijection between the crossing pairs of $u$- and $v$-simplices formed by some points
in $P$ and the partitions of the points in $\overline{D(P)}$ into $\overline{D^+(P)}$ and $\overline{D^-(P)}$ 
such that $the ~number~ of~ white$ $points~in~\overline{D^+(P)}$ $plus~the ~number$ $~ of~ black~ points~in$ $\overline{D^-(P)}$ $~is~u+1$ and $the ~number~$ $of~ white~ points$ $~in~\overline{D^-(P)}~$ $plus$ $the ~number~ of~ black~ points$ $in~\overline{D^+(P)}~is~ v+1$.
\end{lemma}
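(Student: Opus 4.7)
The plan is to reduce Lemma \ref{bjection1} to Lemma \ref{bjection} by exhibiting a cardinality-preserving bijection between linear separations of $D(P)$ and hyperplane partitions of $\overline{D(P)}$, once the coloring convention of the affine Gale diagram is properly accounted for.

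First I would establish the hyperplane correspondence. Each linear hyperplane $H$ in $\mathbb{R}^{m-d-1}$ that avoids every vector of $D(P)$ meets $\bar{h}$ in an affine hyperplane $H' = H \cap \bar{h}$ of $\bar{h}$, and conversely every affine hyperplane of $\bar{h} \cong \mathbb{R}^{m-d-2}$ that avoids every point of $\overline{D(P)}$ is the trace of a unique linear hyperplane through the origin. Since $\bar{h}$ does not contain the origin and is not parallel to any $g_i$, this map is a bijection on generic hyperplanes. Linear hyperplanes containing some $g_i$ do not define linear separations, and by Lemma \ref{genposi} these form a lower-dimensional locus that can be perturbed away without changing any counts.

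Next I would track each vector individually. By construction, the extension from $g_i$ to $\overline{g_i}$ is in the direction of $g_i$ if and only if $\overline{g_i}$ is coloured white. Hence $\overline{g_i}$ lies on the same open side of $H'$ in $\bar{h}$ as $g_i$ does of $H$ in $\mathbb{R}^{m-d-1}$ precisely when $\overline{g_i}$ is white; if $\overline{g_i}$ is black, the two open sides swap. Consequently, the elements of $D^+(P)$ correspond exactly to the white points of $\overline{D^+(P)}$ together with the black points of $\overline{D^-(P)}$, and the elements of $D^-(P)$ correspond to the white points of $\overline{D^-(P)}$ together with the black points of $\overline{D^+(P)}$.

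Combining this identification with Lemma \ref{bjection}, the conditions $|D^+(P)| = u+1$ and $|D^-(P)| = v+1$ translate exactly into the two counting identities demanded by Lemma \ref{bjection1}, giving the desired bijection with crossing pairs of $u$- and $v$-simplices. The main bookkeeping task is the colour-sign correspondence of the previous paragraph; once it is observed that reversing the direction of a $g_i$ that is not contained in a linear hyperplane simply swaps the two open halfspaces to which $g_i$ and $\overline{g_i}$ belong, the identification is forced, and the rest is a direct transcription of Lemma \ref{bjection} through the affine Gale diagram.
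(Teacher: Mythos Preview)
The paper does not actually prove Lemma~\ref{bjection1}; it is presented as a direct restatement of Lemma~\ref{bjection} in the language of affine Gale diagrams, with the translation left implicit in the construction of $\overline{D(P)}$ and attributed to \cite{JM}. Your argument makes that translation explicit and is correct: the key observation is precisely the colour--sign correspondence you state, namely that $g_i$ and $\overline{g_i}$ lie on the same side of $H$ and $H'=H\cap\bar h$ respectively if and only if $\overline{g_i}$ is white, which converts the cardinality condition of Lemma~\ref{bjection} into the white/black count of Lemma~\ref{bjection1}.

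One small technical remark: besides the linear hyperplanes containing some $g_i$, there is a second degenerate family you glossed over, namely linear hyperplanes parallel to $\bar h$, which do not meet $\bar h$ in an affine hyperplane at all. This is again a codimension-one condition among linear hyperplanes, so the same perturbation argument disposes of it (a small rotation of $H$ keeps the linear separation of $D(P)$ unchanged while making $H\cap\bar h$ a genuine affine hyperplane). With that caveat, your derivation is exactly the intended one.
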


\section{Balanced Lines, $j$-Facets and $k$-Sets }
\label{TU1}
In this section, we describe the properties of balanced lines, $j$-facets and $k$-sets that are used in the proofs of all four theorems.

\vspace{-0.2cm}
\subsection{Balanced Lines}
\noindent Consider a set $R$ containing $r$ points in general position in $\mathbb{R}^2$, such that $\ceil{{r}/{2}}$ points are colored white and $\floor{{r}/{2}}$ points are colored black. Let us state the definitions of a {\textit {balanced line}} and an {\textit {almost balanced line}} of $R$, and discuss their properties that are used in the proof of Theorem \ref{thm4}.\\

\noindent\textbf{Balanced Line:} \cite{PP}  A balanced line $l$ of $R$ is a straight line that passes through a white and a black point in $R$ and the number of black points is equal to the number of white points in each of the open half-spaces created by $l$.\\

\noindent Note that a balanced line exists only when $r$ is even. The following lemma gives a non-trivial lower bound on the number of balanced lines of $R$.

\begin{lemma}\cite{PP}
\label{bline}
When $r$ is even, the number of balanced lines of $R$  is at least $r/2$.
\end{lemma}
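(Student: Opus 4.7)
The plan is to adapt the continuous Ham-Sandwich rotation argument of Pach and Pinchasi. For each unit direction $u\in S^1$, the two-dimensional Ham-Sandwich theorem yields a line $\ell(u)$ perpendicular to $u$ that simultaneously bisects the set of $r/2$ white points and the set of $r/2$ black points in $R$. When $r/2$ is odd, the parity constraint together with general position forces $\ell(u)$ to contain exactly one white and one black point, so $\ell(u)$ is a balanced line by definition. The case in which $r/2$ is even can be reduced to the odd case by an infinitesimal perturbation that splits off one point while preserving balanced incidences.

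I would next analyze how the combinatorial type of $\ell(u)$ evolves as $u$ rotates continuously around $S^1$. Generically $\ell(u)$ has a unique pivot pair $(w(u),b(u))$ with $w(u)$ white and $b(u)$ black, and this pair is piecewise constant in $u$. It can change only at a finite set of critical directions at which a third point of $R$ becomes incident to the line; at such a direction exactly one endpoint of the current pivot pair is swapped for another point of the same color. The map $u \mapsto \{w(u),b(u)\}$ therefore partitions $S^1$ into arcs, each of which contributes one balanced line.

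The key step is to lower bound the number of distinct balanced lines encountered during the sweep. Let $b(p)$ denote the number of balanced lines incident to $p\in R$; then the total number of balanced lines equals $\tfrac{1}{2}\sum_{p\in R} b(p)$. I would use the rotation to show that every point of $R$ must appear as a pivot endpoint of $\ell(u)$ for some arc, and a signed-transition bookkeeping at the critical directions would then force $\sum_{p\in R} b(p) \geq r$, yielding at least $r/2$ balanced lines as required.

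The main obstacle is this final counting step. One must rule out degenerate trajectories where the pivot pair revisits the same balanced line many times before exhausting all points of $R$. I would handle this by assigning to each critical transition a sign determined by which color is swapped and on which side of $\ell(u)$ the new incidence appears, and then invoking the topological invariance of the closed trajectory of $\ell(u)$ as $u$ traverses $S^1$ to cancel spurious repetitions and reach the clean bound $r/2$.
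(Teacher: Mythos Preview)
The paper does not prove this lemma at all; it is quoted from Pach and Pinchasi \cite{PP} and used as a black box (the paper also notes, via \cite{MS}, that it is equivalent to the lower bound on halving triangles in $\mathbb{R}^3$). So there is nothing in the paper itself to compare your argument against; the relevant comparison is with \cite{PP}.

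Your outline is in the spirit of a rotation argument, but it has a genuine gap precisely where you yourself flag ``the main obstacle.'' The assertion that every point of $R$ must occur as a pivot endpoint of $\ell(u)$ for some arc is not something the Ham-Sandwich sweep hands you for free, and it is essentially equivalent to what you are trying to prove: if every $p$ satisfies $b(p)\ge 1$ then $\sum_p b(p)\ge r$ and you are done, but nothing in your sketch establishes that inequality. The appeal to ``signed transitions'' and ``topological invariance of the closed trajectory of $\ell(u)$'' is not yet an argument --- one must name the invariant, compute it, and show it forces the lower bound, and that is exactly the nontrivial content of \cite{PP}. Separately, your reduction of the even-$r/2$ case by an ``infinitesimal perturbation that splits off one point'' does not work as stated: a perturbation of the points does not change the parity of $r/2$, and adding or deleting a point changes $R$, so balanced lines of the modified configuration need not be balanced lines of $R$. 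In short, you have identified the right phenomenon but have not supplied the mechanism that converts it into the bound $r/2$.
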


\noindent We extend the definition of a balanced line to define an \textit{almost balanced directed line} of $R$.\\

\noindent\textbf{Almost Balanced Directed Line:}  When $r$ is even, an almost balanced directed line $l$ of $R$ is a balanced line with direction assigned from the
black point to the white point it passes through. When $r$ is odd, an almost balanced directed line $l$ of $R$ is a directed straight line that passes through a white and a black point in $R$ such that the number of black points is equal to the number of white points in  the positive open half-space created by $l$.\\

\noindent We obtain the following observation from Lemma \ref{bline}.

\begin{observation} 
\label{obs:222}
The number of almost balanced directed lines of $R$  is at least $\floor{{r}/{2}}$.
\end{observation}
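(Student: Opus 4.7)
The approach is to split into two cases based on the parity of $r$ and reduce to Lemma \ref{bline} in both. The even case is immediate, while the odd case requires a small trick to restore parity before the lemma applies.

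When $r$ is even, $R$ contains $r/2$ white and $r/2$ black points, and Lemma \ref{bline} supplies at least $r/2$ balanced lines of $R$. Since a balanced line has equal numbers of white and black points in \emph{each} of its two open half-spaces, both of its orientations satisfy the defining condition of an almost balanced directed line, yielding at least $r \geq \floor{r/2}$ such directed lines at no extra cost.

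When $r$ is odd, $R$ contains $(r+1)/2$ white and $(r-1)/2$ black points, and a quick parity count shows that no line of $R$ can itself be balanced in the sense of Lemma \ref{bline}. The plan is to remove a single white point $q_0 \in R$ and set $R' = R \setminus \{q_0\}$, so that $R'$ has $(r-1)/2$ white and $(r-1)/2$ black points in general position in $\mathbb{R}^2$. Applying Lemma \ref{bline} to $R'$ produces at least $(r-1)/2$ balanced lines of $R'$. For each such line $l$, general position of $R$ forces $q_0 \notin l$, so $q_0$ lies in exactly one of the two open half-spaces of $l$; I would orient $l$ so that $q_0$ lies in its negative open half-space. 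The positive open half-space then contains the same white and black points of $R$ as of $R'$ and therefore still has equal counts, so $l$ with this orientation is an almost balanced directed line of $R$. Distinct balanced lines of $R'$ are distinct undirected lines, hence give rise to distinct directed lines of $R$, and altogether we obtain at least $(r-1)/2 = \floor{r/2}$ almost balanced directed lines.

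The main obstacle I anticipate is identifying the right parity-restoration step for the odd case; once the deletion of a single white point is in hand, verifying that the half-space not containing $q_0$ retains its balance after reinsertion is a routine check that uses only general position.
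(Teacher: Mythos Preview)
Your argument is correct and matches the paper's intended reasoning: the paper states Observation~1 as an immediate consequence of Lemma~\ref{bline} without an explicit proof, but the case split on the parity of $r$ and the deletion of one white point in the odd case (then orienting each balanced line of $R'$ so that the deleted point lies in the negative half-space) is exactly the derivation the authors have in mind. Your even-case count of $r$ directed lines is even a bit stronger than needed, since one orientation per balanced line already gives $r/2=\lfloor r/2\rfloor$.
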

 
\vspace{-0.3cm}

\subsection{$j$-Facets and  $k$-Sets}
Consider a set $S$ containing $s$ points in general position in $\mathbb{R}^3$. Let us first state the definitions of  a \textit{$j$-facet} and an \textit{$(\leq j)$-facet} of $S$ for some integer $j \geq 0$. We then state the definitions of a \textit{$k$-set} and an \textit{$(\leq k)$-set} of $S$ for some integer $k \geq 1$, and discuss their properties that are used in the proofs of Theorems \ref{thm1}, \ref{thm2} and \ref{thm3}.  \\ 

\noindent{\textbf {$j$-facet:}} \cite{ANDR}  A {\textit{$j$-facet}} of $S$ is an oriented $2$-dimensional hyperplane spanned by $3$ points in $S$ such that exactly $j$ points of $S$ lie in the positive open half-space created by it.
\vspace{0.2cm}\\
\noindent  Let us denote the number of $j$-facets of $S$ by $E_{j}$.
\vspace{0.2cm}\\
\noindent{\textbf {$(\leq j)$-facet:}} \cite{ANDR}   An $(\leq j)$-facet of $S$ is an oriented $2$-dimensional hyperplane $h$ spanned by $3$ points in $S$ such that at most $j$ points of $S$ lie in the positive open half-space created by it.\\

\noindent{\textbf {Almost Halving Triangle:}}  An almost halving triangle of $S$ is a $j$-facet of $S$ such that $\left| j - (s-j-3)\right|$ is at most one.\\

\noindent When $s$ is odd, note that an almost halving triangle is a {\textit{halving triangle}} containing an equal number of points in each of the open half-spaces
created by it. The following lemma gives a non-trivial lower bound on the number of halving triangles of $S$. In fact, it is shown in \cite{MS} that this lemma is equivalent to Lemma \ref{bline}.  

\begin{lemma}\cite{MS}
\label{HTR}
When $s$ is odd, the number of halving triangles of $S$  is at least $\floor{{s}/{2}}^{2}$.
\end{lemma}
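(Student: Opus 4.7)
My plan is to reduce Lemma \ref{HTR} to Lemma \ref{bline} via a central projection argument, the equivalence being the one established in \cite{MS}.

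Concretely, I would first fix an arbitrary point $p\in S$ and choose a plane $\pi$ that avoids $p$ and is generic with respect to $S$. For each $q\in S\setminus\{p\}$, let $q'$ be the intersection of the line $\overline{pq}$ with $\pi$, and color $q'$ white if $q$ and $\pi$ lie on the same side of $p$ (i.e.\ the projection parameter is positive), black otherwise. Because $s-1$ is even, rotating $\pi$ about $p$ and tracking the white--black imbalance (which changes in steps of $\pm 2$ each time $\pi$ sweeps across a point of $S$) lets me choose $\pi$ so that the 2-coloring is balanced with exactly $\floor{s/2}$ white and $\floor{s/2}$ black points.

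The key step is then the correspondence: for $q,r\in S\setminus\{p\}$, the plane through $p,q,r$ is a halving triangle of $S$ if and only if the line $\overline{q'r'}$ is a balanced line of the 2-colored projection with $q'$ and $r'$ of opposite colors. This is a short sign analysis: for any $x\in S\setminus\{p,q,r\}$, the half-space of the plane containing $x$ agrees with the half-plane of $\overline{q'r'}$ containing $x'$ when $x'$ is white and disagrees when $x'$ is black, because the projection parameter flips sign. Writing $W_{+},B_{+}$ (resp.\ $W_{-},B_{-}$) for the number of white and black projected points on the two sides of $\overline{q'r'}$, the halving condition $W_{+}+B_{-}=W_{-}+B_{+}=(s-3)/2$ combined with the balanced coloring forces $W_{+}=B_{+}$ and $W_{-}=B_{-}$, which is precisely the balanced line condition. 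Applying Lemma \ref{bline} with $r=s-1$ then produces at least $\floor{s/2}$ balanced lines, and hence at least $\floor{s/2}$ halving triangles of $S$ passing through $p$.

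To upgrade to the global bound $\floor{s/2}^{2}$, I would iterate this construction over every $p\in S$ and double-count. This last step is the main obstacle: naive aggregation, summing $\floor{s/2}$ over all $s$ choices of $p$ and dividing by $3$ for vertex multiplicity, gives only $\Omega(s^{2}/6)$ halving triangles, while the target $((s-1)/2)^{2}$ has the sharper constant $1/4$. Closing this gap requires the finer incidence argument from \cite{MS}, which exploits the way balanced lines obtained from different projection centers must interleave inside the ambient 3D configuration and upgrades the per-point estimate to a structural identity that also picks up the halving triangles whose projected endpoint pair is monochromatic. This refined counting, rather than the bare reduction, is the delicate part of the proof.
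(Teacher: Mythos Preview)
The paper does not give its own proof of Lemma~\ref{HTR}; it simply cites \cite{MS} and remarks that the statement is equivalent to Lemma~\ref{bline}. So there is no in-paper argument to compare against, and your plan to reduce to Lemma~\ref{bline} via central projection is exactly the direction of the equivalence that \cite{MS} establishes.

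Your reduction is set up correctly. The choice of a plane through $p$ that bisects $S\setminus\{p\}$ exists because $s-1$ is even and the points are in general position, and the sign analysis showing that, for opposite-colored $q',r'$, ``$pqr$ is halving'' is equivalent to ``$\overline{q'r'}$ is balanced'' is accurate. This legitimately yields at least $\lfloor s/2\rfloor$ halving triangles through every fixed $p$.

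The gap you identify is genuine and you have located it precisely. Summing the per-point bound over all $p\in S$ and dividing by $3$ gives only $s(s-1)/6$ halving triangles, which falls short of $((s-1)/2)^2$ by an asymptotic factor of $3/2$. Your description of how \cite{MS} closes this gap is somewhat impressionistic; the actual mechanism there is not an interleaving-of-projections argument but rather an identity that expresses the number of halving triangles in terms of a sum of local ``winding'' quantities, each of which is individually bounded below via the balanced-line statement, and which together add up with the correct constant. The monochromatic-endpoint halving triangles you mention are indeed the missing mass, but recovering them requires that identity rather than a refinement of the naive double count. Since the paper itself is content to cite \cite{MS} for the full statement, your sketch is adequate for the paper's purposes, but as a self-contained proof it stops exactly where you say it does.
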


\noindent We obtain the following observation from Lemma \ref{HTR}.
\begin{observation} 
\label{obs:htr}
The number of almost halving triangles  of $S$  is at least $\floor{{s}/{2}}^2$.
\end{observation}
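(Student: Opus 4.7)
The plan is to split the argument by the parity of $s$, since the almost halving condition collapses to the halving condition when $s$ is odd. First I would rewrite $|j-(s-j-3)|\leq 1$ as $(s-4)/2\leq j\leq (s-2)/2$; when $s$ is odd the only integer in this range is $(s-3)/2$, so every almost halving triangle of $S$ is a halving triangle, and Lemma~\ref{HTR} immediately yields the bound $\floor{s/2}^2$.

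For $s$ even the almost halving triangles are precisely the $((s-4)/2)$- and $((s-2)/2)$-facets of $S$, and I would control their total count by a deletion-and-double-counting argument. For each $q\in S$, the set $S\setminus\{q\}$ has $s-1$ (odd) points in general position in $\mathbb{R}^3$, so Lemma~\ref{HTR} furnishes at least $\floor{(s-1)/2}^2 = ((s-2)/2)^2$ halving triangles of $S\setminus\{q\}$, each having $(s-4)/2$ non-triangle points on each side. Re-inserting $q$, general position forces it to lie on exactly one of the two open half-spaces, so each such halving triangle of $S\setminus\{q\}$ lifts to an almost halving triangle of $S$. Summing over the $s$ choices of $q$ yields at least $s\cdot((s-2)/2)^2$ incidences.

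The only real subtlety is identifying the multiplicity with which each almost halving triangle of $S$ is counted in this double sum. A fixed almost halving triangle $T$ of $S$ has $(s-4)/2$ non-triangle points on one side and $(s-2)/2$ on the other, and it reappears as a halving triangle of $S\setminus\{q\}$ precisely when $q$ is one of the $(s-2)/2$ points on the larger side. Dividing therefore gives $E_{(s-4)/2}+E_{(s-2)/2}\geq s(s-2)/2$, and the elementary inequality $s(s-2)/2\geq (s/2)^2=\floor{s/2}^2$ for every even $s\geq 4$ finishes the observation (the tiny cases $s\leq 3$ being vacuous since a triangle requires three vertices). The main (modest) obstacle is the parity bookkeeping that pins down the multiplicity $(s-2)/2$; once that is in place the arithmetic is routine.
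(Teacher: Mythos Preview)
Your argument is correct. The paper itself offers no detailed proof, merely asserting that the observation ``follows from Lemma~\ref{HTR}''; your proof is a legitimate way to make that assertion precise. The odd-$s$ case is indeed immediate from Lemma~\ref{HTR}, and your deletion-and-double-counting for even $s$ is clean: each of the $s$ deleted sets $S\setminus\{q\}$ contributes at least $((s-2)/2)^2$ oriented halving triangles, and each oriented almost-halving triangle of $S$ is recovered for exactly the $(s-2)/2$ choices of $q$ on its larger side, giving $E_{(s-4)/2}+E_{(s-2)/2}\ge s(s-2)/2\ge (s/2)^2$ for every even $s\ge 4$.

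One small remark: the naive single-deletion argument (remove one point, apply Lemma~\ref{HTR}, lift back) only yields $((s-2)/2)^2$ for even $s$, which is strictly below $\lfloor s/2\rfloor^2$; it is precisely your averaging over all deletions that recovers the stated constant. Since the paper only needs the $\Omega(s^2)$ consequence (Observation~3), it may well have had the cruder bound in mind, so your argument actually supplies more than the paper demands. A tiny quibble on the boundary cases: $s=3$ is not vacuous (there are two oriented $0$-facets, and the bound is $1$), while $s=2$ genuinely fails; but as you note, only large $s$ matters for the applications.
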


\noindent We consider the following lemma which gives a non-trivial lower bound on the number of $(\leq j)$-facets of $S$.
\begin{lemma}\cite{ALCHO}
\label{ALCH}
For $j < {s}/{4}$ , the number of $(\leq j)$-facets of $S$  is at least $4\dbinom{j+3}{3}$.
\end{lemma}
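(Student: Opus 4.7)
The plan is to prove Lemma~\ref{ALCH} by reducing it to the planar analog via radial projection, and then applying a double-counting argument over all points of $S$. The planar analog I would invoke is the classical Alon--Győri/Welzl bound, which asserts that any $n$-point set in general position in $\mathbb{R}^2$ has at least $3\binom{k+2}{2}$ oriented $(\leq k)$-edges whenever $k < n/2$.

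For each $p \in S$, I would choose a generic hyperplane $H_p$ and radially project $S \setminus \{p\}$ from $p$ onto $H_p$, obtaining a point set $S_p \subset \mathbb{R}^2$ of $s-1$ points in general position. The key observation is that an oriented triangle $\triangle pxy$ with $p$ as a vertex is a $j$-facet of $S$ precisely when the oriented segment $\overline{\pi_p(x)\pi_p(y)}$ is a $j$-edge of $S_p$, since the supporting plane of $\triangle pxy$ partitions $S \setminus \{p\}$ in exactly the same way as the supporting line of the projected segment partitions $S_p$. Because $j < s/4 < (s-1)/2$, the planar bound guarantees at least $3\binom{j+2}{2}$ $(\leq j)$-facets of $S$ incident to each $p$. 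Summing over $p \in S$ and noting that each oriented facet has exactly three vertices yields
\begin{equation*}
  E_{\leq j}(S) \;=\; \frac{1}{3}\sum_{p \in S} \bigl|\{(\leq j)\text{-facets of } S \text{ through } p\}\bigr| \;\geq\; s\,\binom{j+2}{2}.
\end{equation*}

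Using the hypothesis $s \geq 4j+1$, elementary algebra reduces the target inequality $s\binom{j+2}{2} \geq 4\binom{j+3}{3}$ to $s \geq \tfrac{4}{3}(j+3)$, which is comfortable for $j \geq 2$. The boundary cases $j = 0$ and $j = 1$ would be handled separately: for $j=0$, a $0$-facet corresponds to an outward-oriented facet of the convex hull $\mathrm{Conv}(S)$, and any simplicial $3$-polytope has at least $4$ triangular facets, so $E_{\leq 0} \geq 4 = 4\binom{3}{3}$; for $j=1$, a short refinement combining the projection estimate with the halving-triangle bound of Lemma~\ref{HTR} recovers the extra unit needed in the tight configuration $s=5$.

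The principal obstacle lies in making the projection step fully rigorous — in particular, verifying that the generic choice of $H_p$ keeps $S_p$ in general position and that the orientation of the triangle in $\mathbb{R}^3$ matches that of the projected segment in $\mathbb{R}^2$ so that the counts of $(\leq j)$-facets and $(\leq j)$-edges agree exactly rather than merely up to an additive constant. A secondary, more technical obstacle is the patching of the boundary case $j=1$, where the projection estimate is tight up to a small deficit and one must use either Lemma~\ref{HTR} or an ad hoc enumeration of the $5$-point convex-hull configurations to close the gap.
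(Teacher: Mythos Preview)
The paper does not prove Lemma~\ref{ALCH}; it is quoted from Aichholzer, Garc{\'\i}a, Orden and Ramos~\cite{ALCHO} and invoked only as a black box to obtain the asymptotic Observation~4. There is no in-paper argument for you to compare against.

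As a standalone attempt, your projection-and-average strategy is a legitimate route to this type of bound. The bijection between $j$-facets of $S$ through a fixed vertex $p$ and $j$-edges of the radial projection $S_p$ is standard and can be made rigorous with a generic choice of $H_p$; orientations transfer consistently, so neither of the obstacles you flag is serious. Summing the planar $(\leq j)$-edge lower bound over all $p\in S$ and dividing by $3$ gives $E_{\leq j}(S) \geq s\binom{j+2}{2}$, which dominates $4\binom{j+3}{3}$ whenever $s \geq \tfrac{4}{3}(j+3)$; under the hypothesis $s \geq 4j+1$ this holds for all $j \geq 2$.

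Your instinct that the case $j=1$ is delicate is correct, but it cannot be patched in the way you suggest: five points in convex position in $\mathbb{R}^3$ (a triangular bipyramid) have $E_0=6$ and $E_1=8$, hence only $14$ $(\leq 1)$-facets, whereas $4\binom{4}{3}=16$. The inequality as literally transcribed in the paper therefore fails at $(s,j)=(5,1)$; the statement in~\cite{ALCHO} presumably carries a marginally stronger hypothesis on the range of $j$, and in any event the paper uses only the asymptotic consequence $\Omega(s^3)$. Apart from this boundary artefact your argument is complete.
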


\noindent{\textbf {$k$-Set:}} \cite[page 265]{JM}  A {\textit{$k$-set}} of $S$ is a non-empty subset of $S$ having size $k$ that can be separated from the rest of the points by a $2$-dimensional hyperplane that does not pass through any of the points in $S$.
\vspace{0.2cm}\\
 Let us denote the number of $k$-sets of $S$ by $e_{k}$.\\

\noindent{\textbf {$(\leq k)$-Set:}} \cite{ANDR} A subset $T \subseteq S$ is called an {\textit{$(\leq k)$-set}} if $1 \leq |T| \leq k$ and $T$ can be separated from $S \setminus T$ by a $2$-dimensional hyperplane that does not pass through any of the points in $S$. 
\vspace{0.2cm}\\
Andrzejak et. al. \cite{ANDR} gave the following lemma about the relation between the $j$-facets and the $k$-sets of $S$.

\begin{lemma}\cite{ANDR}
\label{AND}
$e_1 = (E_0/2)+2$, $e_{s-1} = (E_{s-3}/2)+2$, and $e_{k} = (E_{k-1}+E_{k-2})/2+2$ for each $k$ in the range $2 \leq k \leq s-2$.
\end{lemma}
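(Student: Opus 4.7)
The plan is to handle the two boundary cases by Euler's formula on $\mathrm{conv}(S)$ and the general case by a dual-arrangement Euler-formula argument on a level surface.

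For $k=1$, every $1$-set is a vertex of $\mathrm{conv}(S)$, while every $0$-facet is a triangular facet of $\mathrm{conv}(S)$ with its outward orientation. As $S$ is in general position, $\mathrm{conv}(S)$ is a simplicial $3$-polytope with $V=e_1$ vertices, $E$ edges, and $F=E_0$ triangular facets; the incidence $3F=2E$ combined with Euler's formula $V-E+F=2$ yields $e_1=E_0/2+2$. The case $k=s-1$ is then immediate from the complementation bijection $T\mapsto S\setminus T$ on subsets (giving $e_{s-1}=e_1$) together with the orientation-reversal bijection on triples (giving $E_{s-3}=E_0$).

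For $2\le k\le s-2$, I would pass to the standard point-plane duality: each $p\in S$ maps to a non-vertical plane $p^{*}$ in a dual copy of $\mathbb{R}^3$ so that the number of primal points strictly in the positive half-space of an oriented primal plane $h$ equals the \emph{level} of $h^{*}$ (the number of dual planes above $h^{*}$). Hence $e_k$ is the number of $3$-cells of the dual arrangement $\mathcal{A}$ at level $k$. One-point compactifying the boundary of the union of these cells produces a piecewise-linear topological $2$-sphere whose $F$ two-faces are in bijection with the $k$-sets (one per $3$-cell), whose edges come from intersections of pairs of dual planes, and whose $V$ vertices are triple intersections in $\mathcal{A}$; each such triple intersection corresponds to a primal plane through three points of $S$, i.e., to a $j$-facet for some $j$. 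A local analysis of the eight octants at each vertex, together with the orientation fixed by the duality, shows that exactly the $(k-1)$- and $(k-2)$-facets contribute and each is counted once, so $V=E_{k-1}+E_{k-2}$. General position forces every vertex of $\mathcal{A}$ to be simple and hence of degree $3$ on the surface, so $E=3V/2$. Euler's formula $V-E+F=2$ then yields $F=V/2+2$, i.e., $e_k=(E_{k-1}+E_{k-2})/2+2$.

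The hard step is the local bookkeeping that isolates exactly the $(k-1)$- and $(k-2)$-facets. A naive octant count allows triple intersections at any level in $\{k-3,k-2,k-1,k\}$ to be vertices of the boundary of some level-$k$ cell, and it takes a careful orientation argument, distinguishing for each candidate vertex which of its incident $2$-faces actually separates a level-$k$ cell from a level-$(k+1)$ cell rather than from a level-$(k-1)$ cell, to narrow the surviving contribution down to the two values $j=k-1$ and $j=k-2$. This step leans essentially on the general-position hypothesis, which is what guarantees that the arrangement $\mathcal{A}$ is simple. A secondary technical point is the compactification at infinity, handled by a projective transformation that closes the unbounded portion of the level surface into a single $2$-cell and thereby restores Euler characteristic $2$.
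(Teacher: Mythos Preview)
The paper does not prove Lemma~\ref{AND}; it is quoted verbatim from Andrzejak et al.\ \cite{ANDR} and used as a black box. So there is no ``paper's proof'' to compare against---you are supplying one where the authors supply none.

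Your boundary cases ($k=1$ and $k=s-1$) are correct and clean. Your strategy for $2\le k\le s-2$---build a cellulated $2$-sphere whose face/vertex counts encode $e_k$ and $E_{k-1}+E_{k-2}$, then read off the identity from Euler's formula together with $2E=3V$---is exactly the mechanism behind the result in \cite{ANDR}. What is wrong is the specific surface you name. The topological boundary of the union of the level-$k$ $3$-cells in the dual arrangement is $L_{k-1}\cup L_k$ (the upper and lower envelopes of that slab), and this is \emph{not} a $2$-sphere: around an arrangement edge with $k-1$ other planes above, the four incident cells have levels $k-1,k,k,k+1$, so two level-$k$ cells meet along that edge and four $2$-faces of your ``boundary'' come together there. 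Likewise your degree-$3$ claim fails: at an arrangement vertex with $k-1$ other planes above, the eight octants have levels $k-1,k,k,k,k+1,k+1,k+1,k+2$, and the separating surface between $\{\text{level}\le k\}$ and $\{\text{level}\ge k+1\}$ has six faces and six edges through that vertex, not three. So the bookkeeping you defer to the ``hard step'' cannot rescue this particular object.

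The clean realization of your $V,E,F$ counts is the $k$-set polytope $Q_k(S)=\mathrm{conv}\bigl\{\sum_{p\in T}p:\,T\subseteq S,\ |T|=k\bigr\}$ (equivalently, the tiling of $\mathbb{S}^2$ by its normal fan). The vertices of $Q_k$ are exactly the $k$-sets. A facet of $Q_k$ has outer normal $v$ for which three points of $S$ tie in the ordering by $\langle v,\cdot\rangle$ with either $k-1$ or $k-2$ points strictly above them; this gives a bijection between facets of $Q_k$ and the $(k-1)$-facets together with the $(k-2)$-facets of $S$, and every such facet is a triangle. Now $V=e_k$, $F=E_{k-1}+E_{k-2}$, $3F=2E$, and Euler's relation $V-E+F=2$ yields $e_k=(E_{k-1}+E_{k-2})/2+2$. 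Dually on $\mathbb{S}^2$ this is precisely the picture your last two paragraphs are reaching for: faces $\leftrightarrow$ $k$-sets, degree-$3$ vertices $\leftrightarrow$ $(k-1)$- and $(k-2)$-facets.
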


\noindent We obtain the following observation from Observation $2$ and Lemma \ref{AND}.
\begin{observation}
\label{obs101}
There exist  $\Omega(s^2)$  $k$-sets of $S$ such that $min\{k,s-k\}$  is at least $\ceil{(s-1)/2}$.
\end{observation}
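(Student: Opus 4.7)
The plan is to derive Observation~\ref{obs101} by combining the lower bound on almost halving triangles from Observation~\ref{obs:htr} with the identity in Lemma~\ref{AND} that converts $j$-facet counts into $k$-set counts. First I would unpack the definition of an almost halving triangle: $|j-(s-j-3)|\leq 1$ forces $j=(s-3)/2$ when $s$ is odd and $j\in\{s/2-2,\,s/2-1\}$ when $s$ is even. By Observation~\ref{obs:htr} the total of $E_j$ over these indices is at least $\lfloor s/2\rfloor^{2}$.

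Next I would feed this through Lemma~\ref{AND}, splitting by the parity of $s$. When $s$ is odd, $E_{(s-3)/2}$ appears in the expressions for both $e_{(s-1)/2}$ and $e_{(s+1)/2}$, so these two counts together absorb at least $E_{(s-3)/2}\geq \lfloor s/2\rfloor^{2}=\Omega(s^2)$ (almost) halving triangles, and both values $k=(s-1)/2$ and $k=(s+1)/2$ satisfy $\min\{k,s-k\}=(s-1)/2=\lceil (s-1)/2\rceil$, meeting the required condition. When $s$ is even, Lemma~\ref{AND} gives $e_{s/2}\geq (E_{s/2-1}+E_{s/2-2})/2$, and the right-hand side is $\Omega(s^{2})$ by Observation~\ref{obs:htr}; for $k=s/2$ we have $\min\{k,s-k\}=s/2=\lceil(s-1)/2\rceil$, again satisfying the condition.

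The one bookkeeping point to watch is the even case, where Lemma~\ref{AND} also routes some of these $j$-facets into $e_{s/2-1}$ and $e_{s/2+1}$, neither of which satisfies the $\min$-condition. However, each of the two relevant values of $j$ also contributes to $e_{s/2}$, so that single index already absorbs enough of the almost halving triangles to guarantee the claimed $\Omega(s^2)$ bound, and no sharper accounting is needed. This mild parity check is the only obstacle; the remainder of the argument is a direct substitution into the identity of Lemma~\ref{AND}.
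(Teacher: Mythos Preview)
Your proposal is correct and follows exactly the approach indicated in the paper, which simply states that Observation~\ref{obs101} is obtained from Observation~\ref{obs:htr} and Lemma~\ref{AND} without spelling out the parity analysis. Your identification of the relevant values of $j$ and $k$ in each parity case, and the use of $e_{s/2}=(E_{s/2-1}+E_{s/2-2})/2+2$ to absorb all almost halving triangles when $s$ is even, is precisely the intended argument.
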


\noindent We obtain the following observation from Lemma \ref{ALCH} and  Lemma \ref{AND}.
\begin{observation}
\label{obs100}
The number of $\left(\leq \ceil{{s}/{4}}\right)$-sets of $S$  is  $\Omega(s^3)$.
\end{observation}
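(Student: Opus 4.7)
The plan is to reduce the count of $(\leq \ceil{s/4})$-sets to a count of $(\leq j)$-facets and then invoke Lemma \ref{ALCH} to finish. Set $n = \ceil{s/4}$ (taking $s$ large enough that $n \leq s-2$, so that Lemma \ref{AND} applies termwise to each $e_k$), write $N_n = \sum_{k=1}^{n} e_k$ for the number of $(\leq n)$-sets, and write $F_j = \sum_{i=0}^{j} E_i$ for the number of $(\leq j)$-facets of $S$. The goal is then to show $N_n = \Omega(s^3)$.

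First I would substitute the formulas from Lemma \ref{AND} into every term of $N_n$, using $e_1 = E_0/2 + 2$ and $e_k = (E_{k-1}+E_{k-2})/2 + 2$ for $2 \leq k \leq n$. Splitting and reindexing the resulting sums, the inner sum $\sum_{k=2}^{n} E_{k-1}$ becomes $F_{n-1} - E_0$ and $\sum_{k=2}^{n} E_{k-2}$ becomes $F_{n-2}$, while the stand-alone $E_0/2$ contributed by $e_1$ cancels the $-E_0/2$ produced by the first sum. A short computation then yields the identity
\[
N_n \;=\; 2n + \tfrac{1}{2}\bigl(F_{n-1} + F_{n-2}\bigr),
\]
so in particular $N_n \geq \tfrac{1}{2}\, F_{n-2}$.

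Finally I would apply Lemma \ref{ALCH} with $j = n-2$. Since $\ceil{s/4} - 2 < s/4$, the hypothesis $j < s/4$ of that lemma is satisfied, and the lemma gives $F_{n-2} \geq 4\binom{n+1}{3} = \Omega(n^3) = \Omega(s^3)$. Combining with the identity above yields $N_n = \Omega(s^3)$, which is exactly the claim of the observation.

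The only mildly delicate step is the reindexing bookkeeping that produces the closed-form identity for $N_n$; once that is in hand, the lower bound on $(\leq j)$-facets from Lemma \ref{ALCH} plugs in directly, and no further combinatorial input is needed.
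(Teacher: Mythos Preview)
Your proof is correct and follows exactly the approach the paper indicates: the observation is stated as a direct consequence of Lemma~\ref{ALCH} and Lemma~\ref{AND}, and you have simply written out the summation that makes this explicit. The identity $N_n = 2n + \tfrac{1}{2}(F_{n-1}+F_{n-2})$ and the application of Lemma~\ref{ALCH} with $j=n-2$ are precisely the intended computation.
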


\vspace{-0.5cm}
\section{Improved Lower Bound on  $\overline {cr}_d(K_{2d}^d)$}
\label{imprvcross}

In this section, we first use Observation $1$ to improve the lower bound on  $\overline {cr}_d(K_{2d}^d)$ to  $\Omega (2^d \sqrt{d})$. We then present the proofs of Theorems \ref{thm1}, \ref{thm2} and \ref{thm3}. Note that  $V=\{v_1, v_2, \ldots, v_{2d}\}$ denotes the set of points corresponding to the vertices in a $d$-dimensional rectilinear drawing of  $K_{2d}^d$ and  $E$ denotes the set of $(d-1)$-simplices created by the corresponding hyperedges.\\

\noindent{\textbf{Proof of Theorem \ref{thm4}:}} Consider a set $V'=\{v_1, v_2, \ldots, v_{d+4}\} \subset V$, whose Gale transform $D(V')$ is a set of $d+4$ vectors in $\mathbb{R}^3$. As mentioned before, the vectors in $D(V')$ can be treated as points in $\mathbb{R}^3$. In order to apply the Ham-Sandwich theorem to obtain a proper linear separation of $D(V')$, we keep the origin in a set and all the points in $D(V')$ in another set. The Ham-Sandwich theorem implies that there exists a linear hyperplane $h$ such that each of the open half-spaces created by it contains at most $\floor{{(d+4)}/{2}}$ vectors of $D(V')$. Since the vectors in $D(V')$ are in general position in $\mathbb{R}^3$, note that at most two vectors in $D(V')$ can lie on $h$ and no two vectors in $D(V')$ lie on a line passing through the origin. As a result, it can be easily seen that we can slightly rotate $h$  to obtain a linear hyperplane $h'$ which creates a proper linear separation of $D(V')$. Consider a hyperplane parallel to $h'$ and project the vectors in $D(V')$ on this hyperplane to obtain an affine Gale diagram $\overline{D(V')}$. Note that $\overline{D(V')}$ contains $\floor{{(d+4)}/{2}}$ points of the same  color and $\ceil{{(d+4)}/{2}}$ points of the other color in $\mathbb{R}^2$. Without loss of generality, let us assume that the majority color is white. Also, note that the points in $\overline{D(V')}$ are in general position in $\mathbb{R}^2$. \par
 Observation $1$ implies that there exist at least $\floor{{(d+4)}/{2}}$ almost balanced directed lines of  $\overline{D(V')}$. Consider an almost balanced directed line that passes through a white and a black point in $\overline{D(V')}$. Consider the middle point $p$ of the straight line segment connecting  these two points. We  rotate the almost balanced directed line slightly counter-clockwise around $p$ to obtain a partition of $\overline{D(V')}$ by a directed line that does not pass through any point of $\overline{D(V')}$. Note that this partition of $\overline{D(V')}$ corresponds to a unique linear separation of  $D(V')$ having at least $\floor{{(d+2)}/{2}}$ vectors in each of the open half-spaces created by the corresponding linear hyperplane. This implies that there exist at least $\floor{{(d+4)}/{2}}$ distinct linear separations of $D(V')$ such that each such linear separation contains at least $\floor{{(d+2)}/{2}}$ vectors in each of the open half-spaces created by the corresponding linear hyperplane.  Lemma \ref{bjection} implies that there exists a unique crossing pair of $u$-simplex and $v$-simplex corresponding to each linear separation of $D(V')$, such that $u+v+2=d+4$ and $min\{u+1,v+1\} \geq \floor{{(d+2)}/{2}}$. It follows from Lemma \ref{extension} that each such crossing pair of $u$-simplex and $v$-simplex can be extended to obtain at least $\dbinom{d-4}{d-\floor{(d+2)/2}}= \Omega\left({2^d}/{\sqrt{d}}\right)$ crossing pairs of $(d-1)$-simplices formed by the hyperedges in $E$.  Therefore, the total number of crossing pairs of hyperedges in a $d$-dimensional rectilinear drawing of $K_{2d}^d$ is at least $\floor{{(d+4)}/{2}}\Omega\left({2^d}/{\sqrt{d}}\right)=\Omega \left(2^d \sqrt{d}\right)$. \qed 
\vspace{0.3cm}

\noindent{\textbf{Proof of Theorem \ref{thm1}:}}
 Since the points in $V$ are not in  convex position in $\mathbb{R}^d$, we assume  without loss of generality that $v_{d+2}$ can be expressed as a convex combination of the points in $V \setminus \{v_{d+2}\}$. The Carath\'{e}odory's theorem implies that $v_{d+2}$  can be expressed as a convex combination of $d+1$ points in $V \setminus \{v_{d+2}\}$. Without loss of generality, we assume these $d+1$ points to be $\{v_1, v_2, \ldots, v_{d+1}\}$. \par
Consider the set of points $V'=\{v_1, v_2, \ldots,  v_{d+5}\} \subset V$. Note that a Gale transform $D(V')$ of it is a collection of $d+5$ vectors in $\mathbb{R}^4$. Lemma \ref{convexity} implies that there exists a linear hyperplane $h$ that partitions $D(V')$ in such a way that one of the open half-spaces created by $h$ contains exactly one vector of $D(V')$. Since the points in $V'$ are in general position in $\mathbb{R}^4$, Lemma \ref{genposi} implies that at most three vectors of $D(V')$  lie on $h$. Since the vectors in $D(V')$ are in general position, it can be easily seen that we can slightly rotate $h$  to obtain a linear hyperplane $h'$ that partitions $D(V')$ such that one of the open half-spaces created by $h'$ contains $d+4$ vectors and the other one contains exactly one vector. \par
 Consider a hyperplane parallel to $h'$.  We project the vectors in $D(V')$ on this hyperplane to obtain an affine Gale diagram $\overline{D(V')}$. Note that $\overline{D(V')}$ contains $d+4$ points of the same  color and one point of the other color in $\mathbb{R}^3$. Without loss of generality, let us assume that the majority color is white. Also, note that the points in $\overline{D(V')}$ are in general position in $\mathbb{R}^3$ since the corresponding vectors in the Gale transform $D(V')$ are in general position in $\mathbb{R}^4$.\par
 Consider the set $W$ containing $d+4$ white points of $\overline{D(V')}$ in $\mathbb{R}^3$.  Observation $3$ implies that there exist  $\Omega(d^2)$ distinct $k$-sets of $W$ such that $min\{k,d+4-k\}$  is at least $\ceil{{(d+3)}/{2}}$. Each of these $k$-sets corresponds to a unique linear separation of  $D(V')$ having at least $\ceil{{(d+3)}/{2}}$ vectors in each of the open half-spaces created by the corresponding linear hyperplane. Lemma \ref{bjection} implies that there exists a unique crossing pair of $u$-simplex and $v$-simplex corresponding to each of these linear separations of  $D(V')$, such that $u+v+2=d+5$ and $min\{u+1,v+1\} \geq \ceil{({d+3})/{2}}$. It follows from Lemma \ref{extension} that each such crossing pair of $u$-simplex and $v$-simplex can be extended to obtain at least $\test{d-5}{d-\ceil{(d+3)/{2}}}$ crossing pairs of $(d-1)$-simplices formed by the hyperedges in $E$. Therefore, the total number of crossing pairs of hyperedges in such a $d$-dimensional rectilinear drawing of $K_{2d}^d$  is at least $\Omega(d^2)\test{d-5}{d-\ceil{({d+3})/{2}}}=\Omega\left(2^d{d}^{3/2}\right)$. \qed
\vspace{0.3cm}

\noindent{\textbf{Proof of Theorem \ref{thm2}:}}
Consider the points in $V$ that form the vertex set of a $d$-dimensional $t$-neighborly polytope which is not $(t+1)$-neighborly. Lemma \ref{neigh} implies that there exists a linear hyperplane $\widetilde{h}$ such that one of the open half-spaces created by it contains $t+1$ vectors of $D(V)$. Without loss of generality, we denote the set of these $t+1$ vectors by $D^+(V)$.  It implies that one of the closed half-spaces created by $\widetilde{h}$ contains  $2d-t-1$ vectors of $D(V)$. If $d-2$ vectors of $D(V)$ do not lie on $\widetilde{h}$, we rotate $\widetilde{h}$ around the lower dimensional hyperplane spanned by the vectors on $\widetilde{h}$ till some new vector $g_i \in D(V)$ lies on it. We keep rotating $\widetilde{h}$ in this way till $d-2$ vectors of D(V) lie on it.  Lemma \ref{neigh} implies that none of these $d-2$ vectors belongs to the set $D^+(V)$.  After rotating $\widetilde{h}$ in the above mentioned way, we obtain a partition of  $D(V)$ by a linear hyperplane $\widetilde{h'}$ such that one of the open half-spaces created by it contains $t+1$ vectors and the other one contains $d+1-t$ vectors.  This implies that there exist a $t$-simplex and a $(d-t)$-simplex created by the vertices in $V$ such that they form a crossing. We choose any three vertices from the rest of  the $d-2$ vertices in $V$ and add these to the $(d-t)$-simplex to form a $(d+3-t)$-simplex. Lemma \ref{extension} implies that  the $t$-simplex forms a crossing with this $(d+3-t)$-simplex. This implies that the  $t$-neighborly sub-polytope formed by the convex hull of the $d+5$ vertices corresponding to these two simplices is  not $(t+1)$-neighborly.\\
\indent ~~Without loss of generality, let the vertex set of this sub-polytope be $V'=\{v_1, v_2,$  $\ldots, v_{d+5}\}$. Note that a Gale transform $D(V')$ of it is a collection of $d+5$ vectors in $\mathbb{R}^4$. Lemma \ref{neigh} implies that there exists a  linear hyperplane $h$ such that one of the open half-spaces created by it contains exactly $t+1$ vectors of $D(V')$. As described in the proof of Theorem \ref{thm1}, it follows from Lemma \ref{genposi} that at most three vectors can lie on $h$. Since the vectors in $D(V')$ are in general position, we can slightly rotate $h$ to obtain a linear hyperplane $h'$ such that one of the open half-spaces created by $h'$ contains $t+1$ vectors and the other one contains $d+4-t$ vectors.\\
 \indent Consider a hyperplane parallel to $h'$ and project the vectors in $D(V')$ on this hyperplane to obtain an affine Gale diagram $\overline{D(V')}$. Note that $\overline{D(V')}$ contains $d+4-t$ points of the same  color and $t+1$ points of the other color in $\mathbb{R}^3$. Without loss of generality, let us assume that these $d+4-t$ points of the same color are white. Also, note that the points in $\overline{D(V')}$ are in general position in $\mathbb{R}^3$.\\
\indent Let us consider the set $W$ consisting of $d+4-t$ white points of $\overline{D(V')}$. Observation $3$ implies that there exist  $\Omega(d^2)$ distinct $k$-sets of $W$ such that $min\{k,d+4-t-k\}$  is at least $\ceil{({d+3-t})/{2}}$. Each of these $k$-sets corresponds to a unique linear separation of  $D(V')$ such that it contains at least $\ceil{{(d+3-t)}/{2}}$ vectors in each of the open half-spaces created by the corresponding linear hyperplane. Lemma \ref{bjection} implies that there exists a unique crossing pair of $u$-simplex and $v$-simplex corresponding to each of these linear separations of  $D(V')$, such that $u+v+2=d+5$ and $min\{u+1,v+1\} \geq$ $ \ceil{{(d+3-t)}/{2}}$. It follows from Lemma \ref{extension} that each such crossing pair of $u$-simplex and $v$-simplex can be extended to obtain at least $\test{d-5}{d-\ceil{{(d+3-t)}/{2}}}$ crossing pairs of $(d-1)$-simplices formed by the hyperedges in $E$. Therefore, the total number of crossing pairs of hyperedges in such a $d$-dimensional rectilinear drawing of $K_{2d}^d$  is at least $\Omega(d^2)\test{d-5}{d-\ceil{{(d+3-t)}/{2}}}=\Omega\left(2^d{d}^{3/2}\right)$. \qed
\vspace{0.34cm}

\noindent{\textbf{Proof of Theorem \ref{thm3}:}}
Since the points in $V$ form the vertex set of a $d$-dimensional $(\floor{d/2}-t')$-neighborly polytope, consider a sub-polytope of it formed by the convex hull of the vertex set $V'$ containing any d+5 points of $V$. Without loss of generality, let   $V'$  be $\{v_1, v_2, \ldots, v_{d+5}\}$. Note that a Gale transform $D(V')$ of it is a collection of $d+5$ vectors in $\mathbb{R}^4$ and an affine Gale diagram $\overline{D(V')}$ of it  is a collection of $d+5$ points in $\mathbb{R}^3$. In this proof, we ignore the colors of these points. However, note that the points in $\overline{D(V')}$ are in general position in $\mathbb{R}^3$.\par
 Consider the set $\overline{D(V')}$.  It follows from Observation $4$ that the number of $\left(\leq\ceil{{(d+5)}/{4}}\right)$-sets of $\overline{D(V')}$ is $\Omega(d^3)$.  For each $k$ in the range $ 1\leq k \leq$ ${\scriptstyle \ceil{{(d+5)}/{4}}}$, a $k$-set of $\overline{D(V')}$  corresponds to a unique linear separation of  $D(V')$.  Lemma \ref{neigh} implies that each of these $\Omega(d^3)$ linear separations of $D(V')$ contains at least $\floor{d/2}-t'+1$ vectors in each of the open half-spaces created by the corresponding linear hyperplane. Lemma \ref{bjection} implies that there exists a unique crossing pair of $u$-simplex and $v$-simplex corresponding to each linear separation of $D(V')$, such that $u+v+2=d+5$ and $min\{u+1,v+1\} \geq \floor{d/2}-t'+1$. It follows from Lemma \ref{extension} that each such crossing pair of $u$-simplex and $v$-simplex can be extended to obtain at least $\dbinom{d-5}{d-\floor{d/2}+t'-1}= \Omega\left({2^d}/{\sqrt{d}}\right)$ crossing pairs of $(d-1)$-simplices formed by the hyperedges in $E$.  Therefore, the total number of crossing pairs of hyperedges in such a $d$-dimensional rectilinear drawing of $K_{2d}^d$ is  $\Omega(d^3)\Omega\left({2^d}/{\sqrt{d}}\right)=\Omega \left(2^d d^{5/2}\right)$. \qed
\vspace{-0.2cm}

\end{document}